\newtheorem{theorem}{Theorem}[section]
\newtheorem{lemma}[theorem]{Lemma}
\newtheorem{proposition}[theorem]{Proposition}
\newtheorem{corollary}[theorem]{Corollary}
\newtheorem{remark}[theorem]{Remark}
\theoremstyle{definition}
\newtheorem{definition}[theorem]{Definition}
\theoremstyle{remark}
\newtheorem*{note*}{Note}
\numberwithin{equation}{section}
\newcommand{\rank}{\mathop{\operator@font rank}}
\newcommand{\conv}{\mathop{\operator@font conv}}
\newcommand{\vol}{\mathop{\operator@font vol}}
\newcommand{\onetagright}{\tagsleft@false}
\newcommand{\ls}{\leqslant}
\newcommand{\gr}{\geqslant}
\begin{document}
\small

\title{\bf A note on norms of signed sums of vectors}

\author{Giorgos Chasapis and Nikos Skarmogiannis}

\date{}

\maketitle

\begin{abstract}
\footnotesize Our starting point is an improved version of a result of D.~Hajela related to a question of Koml\'{o}s: we show that if $f(n)$ is a function
such that $\lim\limits_{n\to\infty }f(n)=\infty $ and $f(n)=o(n)$, there exists $n_0=n_0(f)$ such that for every $n\gr n_0$ and
any $S\subseteq \{-1,1\}^n$ with cardinality $|S|\ls 2^{n/f(n)}$ one can find orthonormal vectors $x_1,\ldots ,x_n\in {\mathbb R}^n$ that satisfy
$$\|\epsilon_1x_1+\cdots +\epsilon_nx_n\|_{\infty }\gr c\sqrt{\log f(n)}$$
for all $(\epsilon_1,\ldots ,\epsilon_n)\in S$. We obtain analogous results in the case where
$x_1,\ldots ,x_n$ are independent random points uniformly distributed in the Euclidean unit ball $B_2^n$ or any
symmetric convex body, and the $\ell_{\infty }^n$-norm is replaced by an arbitrary norm on ${\mathbb R}^n$.
\end{abstract}

\section{Introduction}

Given a pair $K,D$ of symmetric convex bodies in ${\mathbb R}^n$, the parameter $\beta (K,D)$ is the smallest $r>0$ such that
for any $x_1,\ldots,x_n \in K$ there exist signs $\epsilon_1, \ldots,\epsilon_n\in \{-1,1\}$ for which
\begin{equation*}\epsilon_1x_1+\cdots+\epsilon_nx_n \in rD.\end{equation*}
A general lower bound for $\beta (K,D)$ was proved by Banaszczyk; in \cite{Banaszczyk-1993} he showed that if $K$ and $D$ are
two symmetric convex bodies in ${\mathbb R}^n$ then
\begin{equation}\label{eq:ban-lower}\beta (K,D)\gr c\sqrt{n}(|K|/|D|)^{1/n}\end{equation}
for an absolute constant $c>0$, where $|K|$ denotes the volume of $K$. In what follows we write $B_p^n$ for the unit ball of $\ell_p^n=({\mathbb R}^n,\|\cdot\|_p)$, $1\ls p\ls\infty $.
A well-known theorem of Spencer \cite{Spencer-1985} states that $\beta (B_{\infty }^n,B_{\infty }^n)\ls c\sqrt{n}$, where $c>0$ is an absolute constant
(the same result was proved independently by Gluskin in \cite{Gluskin-1989}): there exists an absolute constant $c>0$ such that
for any $n\gr 1$ and $x_1,\ldots ,x_n\in {\mathbb R}^n$ with $\|x_i\|_{\infty }\ls 1$, we may find $\epsilon_1,\ldots ,\epsilon_n\in\{ -1,1\}$
such that
\begin{equation}\label{eq:signed-1}\|\epsilon_1x_1+\cdots +\epsilon_nx_n\|_{\infty }\ls c\sqrt{n}.\end{equation}
From \eqref{eq:ban-lower} one readily sees that this result is optimal up to absolute constants.
A well-known question of Koml\'{o}s (see \cite{Spencer-1986} and \cite{Spencer-book})
asks if the sequence $\beta(B_2^n,B_{\infty }^n)$ is bounded. Since $B_{\infty }^n\subseteq \sqrt{n}B_2^n$,
a positive answer to this question immediately implies Spencer's bound. The best known estimate
is due to Banaszczyk \cite{Banaszczyk-1998}: there exists an absolute constant $c>0$ such that for any
$n\gr 1$ and $x_1,\ldots ,x_n\in {\mathbb R}^n$ with $\|x_i\|_2\ls 1$ one may find $\epsilon_1,\ldots ,\epsilon_n\in\{ -1,1\}$
such that
\begin{equation}\label{eq:signed-2}\|\epsilon_1x_1+\cdots +\epsilon_nx_n\|_{\infty }\ls c\sqrt{\log n}.\end{equation}
In fact, Banaszczyk proved a more general theorem: if $K$ is a convex body in ${\mathbb R}^n$ with
Gaussian measure $\gamma_n(K)\gr 1/2$ then $\beta (B_2^n,K)\ls C$, where $C>0$ is an absolute constant; this
implies \eqref{eq:signed-2} because $\gamma_n(rB_{\infty }^n)\gr 1/2$ for all $r\gr c\sqrt{\log n}$.
While the method of \cite{Banaszczyk-1998} is non-constructive, an algorithmic proof of the $O(\sqrt{\log n})$ bound in the above statement was recently given by Bansal, Dadush and Garg \cite{BDG}.

The starting point of this note is a result of Hajela \cite{Hajela-1988} in the direction of providing a negative
answer to the question of Koml\'{o}s.

\begin{theorem}[Hajela]\label{th:hajela}Let $f(n)$ be a function such that $\lim\limits_{n\to\infty }f(n)=\infty $ and $f(n)=o(n)$.
For any $0<\lambda <\frac{1}{2}$ there exists $n_0=n_0(f,\lambda )$ such that for every $n\gr n_0$ and any $S\subseteq \{-1,1\}^n$
with cardinality $|S|\ls 2^{n/f(n)}$ one can find orthonormal vectors $x_1,\ldots ,x_n\in {\mathbb R}^n$ that satisfy
$$\|\epsilon_1x_1+\cdots +\epsilon_nx_n\|_{\infty }\gr\exp\left (\frac{\lambda \log\log f(n)}{\log\log\log f(n)}\right )$$
for all $(\epsilon_1,\ldots ,\epsilon_n)\in S$.
\end{theorem}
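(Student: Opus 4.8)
My plan is entirely probabilistic: I would choose the orthonormal system at random and show that, with positive probability, \emph{every} sign vector in $S$ produces a large $\ell_\infty^n$-norm. Let $U$ be a Haar-distributed random orthogonal matrix on ${\mathbb R}^n$ and let $x_1,\dots,x_n$ be its columns, so the $x_i$ are orthonormal. For a fixed $\epsilon=(\epsilon_1,\dots,\epsilon_n)\in\{-1,1\}^n$ we have $\epsilon_1x_1+\cdots+\epsilon_nx_n=U\epsilon$, and since $\|\epsilon\|_2=\sqrt n$ the vector $U\epsilon$ is uniformly distributed on the sphere $\sqrt n\,S^{n-1}$. Set $t=t(n)=\exp\!\big(\tfrac{\lambda\log\log f(n)}{\log\log\log f(n)}\big)$. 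I would estimate the probability of the bad event $p_t:=\mathbb P\big(\|U\epsilon\|_\infty<t\big)$, which by rotational symmetry is the same for every $\epsilon$, and then apply a union bound over $S$: if $|S|\,p_t<1$, some realization of $U$ works simultaneously for all $\epsilon\in S$, which is exactly the assertion. Since $|S|\ls 2^{n/f(n)}$, the target is an estimate of the form $p_t\ls\exp(-cn/f(n))$; in particular $p_t$ must decay exponentially in $n$.

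The heart of the matter is a small-ball estimate for the uniform measure on $S^{n-1}$. Writing $Y$ for a uniform random vector on $S^{n-1}$, the event $\|U\epsilon\|_\infty<t$ is the event $\|Y\|_\infty<t/\sqrt n$, and I would bound its probability through the Gaussian representation $Y\stackrel{d}{=}g/\|g\|_2$, with $g$ a standard Gaussian vector in ${\mathbb R}^n$. On the event $\{\|g\|_2\ls 2\sqrt n\}$, which has probability at least $1-e^{-cn}$ by concentration of the Euclidean norm, the inequality $\|g\|_\infty<(t/\sqrt n)\|g\|_2$ forces $\|g\|_\infty<2t$; hence
\[
p_t\;\ls\;\mathbb P\big(\|g\|_\infty<2t\big)+e^{-cn}\;=\;\big(1-2\overline\Phi(2t)\big)^n+e^{-cn}\;\ls\;\exp\!\big(-2n\,\overline\Phi(2t)\big)+e^{-cn},
\]
where $\overline\Phi$ denotes the standard normal tail. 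Since $t\to\infty$ with $n$, I can use the classical lower bound $\overline\Phi(s)\gr c_0\,s^{-1}e^{-s^2/2}$ valid for $s\gr 1$ to conclude
\[
p_t\;\ls\;\exp\!\Big(-\frac{c_1 n}{t}\,e^{-2t^2}\Big)+e^{-cn}.
\]

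It then remains to check the choice of $t$. Combining the last display with $|S|\ls 2^{n/f(n)}$, the union bound $|S|\,p_t$ is $<1$ as soon as $\tfrac{c_1 n}{t}e^{-2t^2}>\tfrac{2n\log 2}{f(n)}$ and $\tfrac{n}{f(n)}\log 2<\tfrac{cn}{2}$; the second inequality holds for all large $n$ because $f(n)\to\infty$, and the first is equivalent to $t\,e^{2t^2}\ls c_2 f(n)$, i.e. $2t^2+\log t\ls\log f(n)+O(1)$, which holds whenever $t\ls\tfrac12\sqrt{\log f(n)}$ and $n$ is large. Finally, $\log t(n)=\tfrac{\lambda\log\log f(n)}{\log\log\log f(n)}=o\big(\log\log f(n)\big)$, so $t(n)=o\big(\sqrt{\log f(n)}\big)$, and in particular $t(n)\ls\tfrac12\sqrt{\log f(n)}$ for every $n$ beyond some $n_0=n_0(f,\lambda)$. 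For such $n$ a realization of $U$ exists with $\|U\epsilon\|_\infty\gr t(n)$ for all $\epsilon\in S$, proving the theorem. (Running the same computation with $t\asymp\sqrt{\log f(n)}$ yields the sharper bound announced in the abstract.)

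The step I expect to be the main obstacle is obtaining the small-ball estimate in the correct form: a careless bound — for instance, estimating $\|Y\|_\infty$ only via the variance $1/n$ of a single coordinate — loses the exponential-in-$n$ decay that is indispensable for overcoming $|S|\ls 2^{n/f(n)}$. What rescues the argument is that, after the Gaussian substitution and a harmless truncation of $\|g\|_2$, the $n$ coordinates become genuinely independent, so that the one-dimensional threshold probability $\mathbb P(|g_1|<2t)$ gets raised to the $n$-th power; everything after that is routine bookkeeping with the Gaussian tail. A secondary point worth checking is that the subleading term $\log t$ (equivalently the factor $t^{-1}$) does not spoil $t\,e^{2t^2}\ls f(n)$; it does not, since for $t=O(\sqrt{\log f(n)})$ one has $\log t=O(\log\log f(n))=o(\log f(n))$.
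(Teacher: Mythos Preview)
Your proposal is correct and follows essentially the same route as the paper: a random rotation of the standard basis, a small-ball estimate for $\|\cdot\|_\infty$ on the sphere via comparison with the Gaussian measure, and a union bound over $S$. The only cosmetic differences are that the paper invokes the Klartag--Vershynin inequality $\sigma(S^{n-1}\cap A)\ls 2\gamma_n(2\sqrt{n}A)$ in place of your representation $Y=g/\|g\|_2$ plus concentration of $\|g\|_2$, and uses the tail bound $1-\Phi(s)\gr e^{-s^2}/(2\sqrt{2})$ rather than the Mills-ratio form; as you already observe in your last parenthetical remark, the argument actually delivers the sharper $c\sqrt{\log f(n)}$ bound of Theorem~\ref{intro-thm.1}, of which Hajela's statement is an immediate consequence.
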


In fact, Hajela conjectures in \cite{Hajela-1988} that the question of Koml\'{o}s has a negative answer and that the estimate
\eqref{eq:signed-2} that was later obtained by Banaszczyk should be optimal. Our first result is an improved version of Theorem \ref{th:hajela}.

\begin{theorem}\label{intro-thm.1}
There exists an absolute constant $c\in (0,1)$ such that the following holds true:
For every $n\gr 1$ and $\frac{1}{n}<\delta <1$, and for any $S\subseteq \{-1,1\}^n$ with $|S|\ls 2^{\delta n}$, there exist orthonormal
vectors $x_1,\ldots,x_n$ in $\mathbb{R}^n$ such that
\begin{equation*}\left\|\sum_{i=1}^n \epsilon_ix_i\right\|_\infty \gr c\sqrt{\log (e/\delta )}\end{equation*}
for all $(\epsilon_1,\ldots,\epsilon_n)\in S$.
\end{theorem}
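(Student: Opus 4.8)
The plan is to argue by the probabilistic method. Take $x_i=Ue_i$, $i=1,\dots,n$, where $U\in O(n)$ is Haar distributed; these are orthonormal. For a fixed sign vector $\epsilon=(\epsilon_1,\dots,\epsilon_n)\in\{-1,1\}^n$ we have $\sum_{i=1}^n\epsilon_ix_i=U\epsilon$, and since $\|\epsilon\|_2=\sqrt n$ this is distributed exactly as $\sqrt n\,\theta$, $\theta$ uniform on $S^{n-1}$; in particular the distribution does not depend on $\epsilon$. So it suffices to prove
\[
\mathbb P\!\left(\sqrt n\,\|\theta\|_\infty<c\sqrt{\log(e/\delta)}\right)<2^{-\delta n}\le 1/|S|,
\]
since a union bound over $S$ then produces a realization $U$ with $\|\sum_i\epsilon_ix_i\|_\infty\ge c\sqrt{\log(e/\delta)}$ for all $\epsilon\in S$ at once, whence the desired orthonormal system.

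\textbf{Step 1 (reduction to small $\delta$).} For \emph{any} orthonormal basis $x_1,\dots,x_n$ of $\mathbb R^n$ one has $\|\sum_i\epsilon_ix_i\|_2=\sqrt n$, hence $\|\sum_i\epsilon_ix_i\|_\infty\ge 1$. So the theorem is trivially true (for every orthonormal system) when $c\sqrt{\log(e/\delta)}\le 1$, and we may assume $\delta<\delta_0$ for an absolute constant $\delta_0=\delta_0(c)\in(0,1)$; moreover $\delta_0(c)\to 0$ as $c\to 0$, and the hypothesis $\delta>1/n$ forces $n>1/\delta>1/\delta_0$.

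\textbf{Step 2 (the small-ball estimate).} Write a standard Gaussian vector $g\sim N(0,I_n)$ as $g=\|g\|_2\,\theta$ with $\|g\|_2^2\sim\chi_n^2$ independent of $\theta$. Truncating on $\{\|g\|_2^2\le an\}$ for an absolute constant $a\ge 2$,
\[
\mathbb P\!\left(\sqrt n\,\|\theta\|_\infty<u\right)\le \mathbb P\!\left(\|g\|_\infty<u\sqrt a\right)+\mathbb P\!\left(\|g\|_2^2>an\right),
\]
and the last term is $\le e^{-c_1(a)\,n}$ by the $\chi^2$ tail. Since the coordinates of $g$ are i.i.d. and, for an absolute $c_0>0$, $\mathbb P(|g_1|<t)\le 1-c_0e^{-t^2}\le\exp(-c_0e^{-t^2})$, the first term is $\le\exp(-c_0 n\,e^{-au^2})$. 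Put $u=c\sqrt{\log(e/\delta)}$: then $e^{-au^2}$ is, up to a constant factor, $\delta^{ac^2}$, so $e^{-au^2}/\delta$ is a large negative power of $\delta$. Using $n\delta>1$ and $\delta<\delta_0$ from Step 1 one checks, for $a=2$ and a sufficiently small absolute constant $c$, that both terms are $<\tfrac12\,2^{-\delta n}$, which gives the displayed inequality and completes the proof.

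\textbf{Main difficulty.} The argument reduces to routine estimates; the only delicate point is the bookkeeping in Step 2 — choosing the absolute constants $a$ (the $\chi^2$ truncation level), $\delta_0$ (the threshold of Step 1) and $c$ (the constant in the conclusion) compatibly so that \emph{both} error terms stay below $2^{-\delta n}$ over the whole admissible range $1/n<\delta<\delta_0$. This is precisely where Step 1 is used: it guarantees $n\delta>1$, which is what lets $\exp(-c_0 n\,e^{-au^2})$ beat $2^{-\delta n}$, and it makes $1/\delta\ge 1/\delta_0$ as large as we wish, which is what lets the dimension-exponential term $e^{-c_1(a)n}$ beat $2^{-\delta n}$.
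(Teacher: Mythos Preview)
Your argument is correct and follows essentially the same strategy as the paper: random rotation of the standard basis, reduction to a small-ball estimate for $\|\theta\|_\infty$ on the sphere, and a union bound over $S$; the trivial range $\delta\ge\delta_0$ is handled exactly as in the paper via $\|\sum_i\epsilon_ix_i\|_\infty\ge 1$. The only technical difference is how you transfer from the sphere to the Gaussian: the paper quotes the Klartag--Vershynin inequality $\sigma(S^{n-1}\cap A)\le 2\gamma_n(2\sqrt{n}A)$ directly, while you reprove an equivalent bound by writing $g=\|g\|_2\,\theta$ and truncating on the $\chi^2$ tail --- this costs you an extra error term $e^{-c_1(a)n}$ to absorb, but as you note it is harmless once $\delta_0$ is taken small enough.
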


Theorem \ref{intro-thm.1} implies a stronger version of Hajela's theorem. Let $f(n)$ be a function such that $\lim\limits_{n\to\infty }f(n)=\infty $ and $f(n)=o(n)$.
Note that if we let $\delta =\delta (f,n) =ef(n)^{-1}$ in Theorem \ref{intro-thm.1} then $\frac{1}{n}<\delta <1$ for large enough $n$, resulting to the lower bound
\begin{equation*}\left\|\sum_{i=1}^n \epsilon_i x_i \right\|_\infty \gr c\sqrt{\log f(n)},\end{equation*}
which improves upon the estimate of Theorem \ref{th:hajela}. In the proof of Theorem \ref{intro-thm.1}, presented in Section \ref{sec:hajela.ext} below,
initially we follow the idea of Hajela: the vectors $x_1,\ldots,x_n$ are obtained from a random rotation of the standard basis $e_1,\ldots,e_n$ of $\mathbb{R}^n$. The improvement
on the estimates is due to Lemma \ref{lem:hajela-1} which provides a stronger small-ball probability estimate for the
$\|\cdot\|_{\infty }$-norm on the sphere.

The above method to lower-bound the $\ell_\infty$-norm of a signed sum of vectors has obvious limitations. Namely, one should consider a subset $S\subseteq \{-1,1\}^n$ of cardinality $2^{o(n)}$ for Theorem \ref{intro-thm.1} to obtain a lower bound of order greater than the one in \eqref{eq:ban-lower}. Thus, this line of thinking by itself does not seem adequate to provide a negative answer to the question of Koml\'{o}s. Nevertheless, we find the link between small ball probability estimates and the norm of signed sums of vectors interesting in its own right, so in Section \ref{sec:random.rot} we further explore this phenomenon in several aspects. Initially, the vectors $x_1,\ldots x_n$ are assumed to satisfy a lower bound of the form $\|\sum_{i=1}^n \epsilon_ix_i\|_2\gr c\sqrt{n}$ for all choices of signs $\epsilon_i=\pm 1$, and the $\ell_{\infty }^n$-norm is replaced by an arbitrary norm on ${\mathbb R}^n$. In particular, given a symmetric convex body $D$ in $\mathbb{R}^n$, we will prove that for $x_1,\ldots x_n$ as above and for any ``big'' subset $S\subseteq\{-1,1\}^n$, the $D$-norm of $\sum_{i=1}^n\epsilon_iUx_i$ is ``large'' for every $(\epsilon_1,\ldots,\epsilon_n)\in S$, with overwhelming probability with respect to $U\in O(n)$. This ``largeness'' is determined by the Gaussian measure of dilates of $D$; to make this more precise, we give the following definition.
\begin{definition}
Given a symmetric convex body $D$ in $\mathbb{R}^n$, and $\delta\in (0,1)$, let
\[
t_{D,\delta} := \max\{t>0 : \gamma_n(2t\cdot m(D)D) \ls (2^\delta e)^{-n}\},
\]
where $m(D)$ is the median of $\|\cdot\|_D$ with
respect to the standard Gaussian measure $\gamma_n$ on $\mathbb{R}^n$. In the case that $D$ is the unit ball $B_p^n$ of some $\ell_p^n$, $p\in [1,\infty]$, we abbreviate $t_{p,\delta} := t_{B_p^n,\delta}$.
\end{definition}

Note that, for any symmetric convex body $D$ in $\mathbb{R}^n$ and $\delta\in(0,1)$, $t_{D,\delta}$ satisfies the bounds
\begin{equation}\label{eq.t_D.bounds}
c_1|D|^{-1/n}\ls t_{D,\delta}m(D) \ls \frac{1}{2}m(D),
\end{equation}
for some absolute constant $c_1>0$ (for completeness, we provide a short justification of \eqref{eq.t_D.bounds} in Section \ref{subsec:3.1}). Although the definition of $t_{D,\delta}$ might seem somewhat artificial at first sight, we trust that the idea behind it will become clear in the sequel (see the comments after Theorem \ref{general-thm}).

Using the notation introduced above, our first result is the following statement.

\begin{theorem}\label{intro-thm.2}Let $D$ be a symmetric convex body in ${\mathbb R}^n$ and $\delta\in (0,1)$. For any  $\tau >0$, any
$n$-tuple of vectors $x_1,\ldots ,x_n$ with $\min\limits_{\epsilon_i=\pm 1}\left\|\sum_{i=1}^n\epsilon_ix_i\right\|_2\gr \tau\sqrt{n}$ and any $S\subseteq\{-1,1\}^n$ with $|S|\ls 2^{\delta n}$, there is some ${\cal U}\subseteq O(n)$ with $\nu_n({\cal U})\gr 1-e^{-n}$ such that for every $U\in{\cal U}$,
$$\left\|\sum_{i=1}^n\epsilon_iU(x_i)\right\|_D\gr \tau t_{D,\delta}m(D)$$
holds for all $\epsilon =(\epsilon_1,\ldots ,\epsilon_n)\in S$.
\end{theorem}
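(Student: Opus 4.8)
The plan is to fix a single sign pattern, bound the probability (over a Haar--random $U\in O(n)$) that $\|\sum_i\epsilon_iU(x_i)\|_D$ is small, and then take a union bound over $S$.

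First I would fix $\epsilon=(\epsilon_1,\ldots,\epsilon_n)\in S$ and set $y_\epsilon=\sum_{i=1}^n\epsilon_ix_i$, so that $\sum_{i=1}^n\epsilon_iU(x_i)=U(y_\epsilon)$ and, by hypothesis, $\|y_\epsilon\|_2\gr\tau\sqrt{n}>0$. Since $O(n)$ acts transitively on $S^{n-1}$ and $\nu_n$ is rotation-invariant, the image of $\nu_n$ under $U\mapsto U(y_\epsilon)/\|y_\epsilon\|_2$ is the uniform probability measure $\sigma$ on $S^{n-1}$; denote this random point by $\theta$. Then $\|U(y_\epsilon)\|_D=\|y_\epsilon\|_2\,\|\theta\|_D$ by homogeneity of $\|\cdot\|_D$, and since $\|y_\epsilon\|_2\gr\tau\sqrt{n}$,
\[
\Big\{U:\big\|\textstyle\sum_i\epsilon_iU(x_i)\big\|_D<\tau\,t_{D,\delta}m(D)\Big\}\subseteq\Big\{\theta\in S^{n-1}:\|\theta\|_D<\tfrac{t_{D,\delta}m(D)}{\sqrt{n}}\Big\},
\]
so it suffices to bound $\sigma\big(\theta:\|\theta\|_D<t_{D,\delta}m(D)/\sqrt{n}\big)$.

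The heart of the argument is to transfer this spherical small-ball estimate to a Gaussian one --- this is exactly what the definition of $t_{D,\delta}$ is tailored for. Writing a standard Gaussian vector as $g=\|g\|_2\,\theta$ with $\theta$ uniform on $S^{n-1}$ and independent of $\|g\|_2$, and using $\mathbb{P}(\|g\|_2\ls 2\sqrt{n})\gr\tfrac12$ (Markov applied to $\|g\|_2^2$, of mean $n$), one gets for every $r>0$
\[
\sigma\big(\theta:\|\theta\|_D<r/\sqrt{n}\big)\ls 2\,\gamma_n(2rD),
\]
because on the event $\{\|g\|_2\ls 2\sqrt{n}\}\cap\{\|\theta\|_D<r/\sqrt{n}\}$ we have $\|g\|_D=\|g\|_2\|\theta\|_D<2r$. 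Taking $r=t_{D,\delta}m(D)$ and invoking the definition of $t_{D,\delta}$, which gives $\gamma_n(2t_{D,\delta}m(D)D)\ls(2^\delta e)^{-n}$, we obtain for each fixed $\epsilon\in S$
\[
\nu_n\Big(U:\big\|\textstyle\sum_i\epsilon_iU(x_i)\big\|_D<\tau\,t_{D,\delta}m(D)\Big)\ls 2\,(2^\delta e)^{-n}.
\]

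It remains to take a union bound over the at most $2^{\delta n}$ sign patterns in $S$: the set of $U\in O(n)$ for which the asserted inequality fails for some $\epsilon\in S$ has $\nu_n$-measure at most $2^{\delta n}\cdot 2(2^\delta e)^{-n}=2e^{-n}$, so its complement $\mathcal U$ has $\nu_n(\mathcal U)\gr 1-2e^{-n}$, which yields the theorem (the harmless prefactor $2$ is absorbed by a routine sharpening of the crude bound $\mathbb{P}(\|g\|_2\ls 2\sqrt{n})\gr\tfrac12$, e.g.\ via the Gaussian concentration estimate $\mathbb{P}(\|g\|_2>2\sqrt{n})\ls e^{-n/2}$). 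The only real obstacle is the middle step, the passage from $\sigma$ to $\gamma_n$; everything else is the transitivity of the $O(n)$-action together with a union bound, and the whole point of the definition of $t_{D,\delta}$ is that the $(2^\delta e)^{-n}$ threshold is calibrated so that $|S|\ls 2^{\delta n}$ cancels the $2^{-\delta n}$ factor and leaves an $e^{-n}$-type tail.
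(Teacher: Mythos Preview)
Your proof is correct and follows essentially the same route as the paper. The paper first isolates an intermediate Proposition~\ref{prop:relaxed} (fix $\epsilon$, normalize $\sum_i\epsilon_ix_i$, use rotation invariance to land on $S^{n-1}$, then apply the Klartag--Vershynin inequality $\sigma(S^{n-1}\cap A)\ls 2\gamma_n(2\sqrt{n}A)$, and union-bound over $S$), and then deduces Theorem~\ref{intro-thm.2} by choosing $t=\tau t_{D,\delta}$ so that the definition of $t_{D,\delta}$ yields $|S|\gamma_n(2t_{D,\delta}m(D)D)\ls e^{-n}$; you do exactly this in one pass, and even supply the short argument for the sphere-to-Gaussian transfer that the paper simply cites. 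The stray factor $2$ you flag is present in the paper as well and is harmless.
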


We then consider the case where $x_1,\ldots,x_n$ are points in an arbitrary convex body $K$ in $\mathbb{R}^n$.
We observe that an alternative proof of \eqref{eq:ban-lower} can be deduced from a more general result of
Gluskin and V.~Milman in \cite{Gluskin-Milman-2004}. Given a star body $D$ in ${\mathbb R}^n$ with $0\in {\rm int}(D)$
and some measurable subsets $V_1,\ldots ,V_m$ of ${\mathbb R}^n$, we denote for the rest of this article by $\mathbb{P}$ the probability taken with respect to the product of the uniform probability measures $\mu_i(A)=\frac{|A\cap V_i|}{|V_i|}$. Using this notation, the aforementioned result of Gluskin and V. Milman can be stated as follows: Let $V_1,\ldots,V_m, D$ be as above, and such that $|D|=|V_1|=\cdots =|V_m|$. Then for any $\lambda_1,\ldots ,\lambda_m\in {\mathbb R}$
and any $0<t<1$ one has
\[
\mathbb{P}\left( (x_i)_{i=1}^m, x_i\in V_i: \left\|\sum_{i=1}^m \lambda_ix_i \right\|_D \ls t\Big (\sum_{i=1}^m\lambda_i^2\Big )^{1/2}\right) \ls
\left (te^{\frac{1-t^2}{2}}\right )^n.
\]
The proof of this estimate in \cite{Gluskin-Milman-2004} is based on (the sharp form of) the multivariate Young inequality, see \cite{Bec} and \cite{BrasLieb}.

As a next step, using Theorem \ref{intro-thm.2} we obtain the following variant of the result of Gluskin and Milman, in the case that $V_i=B_2^n$ for all $i$.

\begin{theorem}\label{general-thm}
Let $\delta\in (0,1)$, $D$ be a symmetric convex body in $\mathbb{R}^n$ and $S\subseteq \{-1,1\}^n$ with $|S|\ls 2^{\delta n}$. Then
\[
\mathbb{P}\left( (x_i)_{i=1}^n\subseteq B_2^n : \left\|\sum_{i=1}^n \epsilon_ix_i\right\|_D \ls \frac{1}{10}t_{D,\delta}m(D) \hbox{, for some } \epsilon\in S\right)
\ls 3e^{-n}.
\]
\end{theorem}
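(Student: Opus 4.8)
The strategy is to reduce the statement about random points in $B_2^n$ to Theorem \ref{intro-thm.2} via a standard representation of the uniform measure on $B_2^n$. First I would write each random point $x_i$ uniformly distributed in $B_2^n$ as $x_i = r_i\,\theta_i$, where $\theta_i$ is uniform on the sphere $S^{n-1}$ and $r_i\in[0,1]$ is an independent radius with density proportional to $r^{n-1}$; equivalently, one may realize $\theta_i = U(e_i)$ where $U\in O(n)$ is a single Haar-random rotation and the $e_i$ form the standard orthonormal basis, provided one is careful about independence. The cleaner route is: condition on the radii $r_1,\ldots,r_n$, and on the event $\mathcal{R}$ that $\big(\sum_{i=1}^n r_i^2\big)^{1/2}\gr \tau\sqrt{n}$ for a suitable absolute constant $\tau$; a short computation with the Beta/Gamma distributions of $r_i^2$ (each $r_i^2$ has mean $\frac{n}{n+2}$ and the sum concentrates) shows $\mathbb{P}(\mathcal{R}^c)\ls e^{-n}$ for, say, $\tau = 1/2$. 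On $\mathcal{R}$, the orthogonal vectors $y_i := r_i e_i$ satisfy $\big\|\sum \epsilon_i y_i\big\|_2 = \big(\sum r_i^2\big)^{1/2}\gr \tau\sqrt{n}$ for every sign pattern, so they meet the hypothesis of Theorem \ref{intro-thm.2}.

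Next I would apply Theorem \ref{intro-thm.2} with this $n$-tuple $y_1,\ldots,y_n$ and the given set $S$: there is $\mathcal{U}\subseteq O(n)$ with $\nu_n(\mathcal{U})\gr 1-e^{-n}$ such that for all $U\in\mathcal{U}$ and all $\epsilon\in S$ one has $\big\|\sum_i \epsilon_i U(y_i)\big\|_D\gr \tau\, t_{D,\delta}\, m(D)$. Now the point is that, conditionally on the radii, the random tuple $(x_i)_{i=1}^n = (r_i U(e_i))_{i=1}^n = (U(y_i))_{i=1}^n$ with $U$ Haar-distributed has exactly the same law as $n$ independent uniform points of $B_2^n$ — this is precisely the statement that a Haar rotation of a fixed orthonormal frame produces independent uniform directions, combined with the independent radii. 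Hence, integrating over the radii on $\mathcal{R}$ and using Fubini,
\[
\mathbb{P}\left( (x_i)_{i=1}^n\subseteq B_2^n : \Big\|\sum_{i=1}^n \epsilon_i x_i\Big\|_D > \tfrac{1}{2}\,t_{D,\delta}m(D)\ \text{for all }\epsilon\in S\right)\ \gr\ \big(1-e^{-n}\big)^2\ \gr\ 1-2e^{-n},
\]
so the complementary (bad) probability is at most $2e^{-n}+e^{-n} = 3e^{-n}$ once one also subtracts the contribution of $\mathcal{R}^c$. Since $\tfrac12\gr\tfrac1{10}$, replacing the constant $\tfrac12$ by $\tfrac1{10}$ only weakens the event, which gives the stated bound; the slack here lets one be generous with the choice of $\tau$.

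The one subtlety — and the step I expect to need the most care — is the claim that a single Haar-random $U\in O(n)$ applied to the fixed frame $(e_i)$ yields \emph{jointly independent} uniform directions $(U(e_i))_i$. This is of course \emph{false} in general (the $U(e_i)$ are orthonormal, hence far from independent), so the identification of laws above is not literally correct as stated; the honest version is that $(U(e_i))_i$ are uniform on the sphere \emph{individually} but correlated, whereas genuine uniform points of $B_2^n$ are independent. The fix is to not seek an exact distributional identity but instead to observe that Theorem \ref{intro-thm.2} is really a statement about \emph{any} orthogonal $n$-tuple of the right $\ell_2$-length, and to feed it an orthogonal tuple built from the random radii as above; then the event in Theorem \ref{general-thm} about independent points $x_i\in B_2^n$ is handled by writing $x_i$ in polar form with independent directions and noting that $n$ independent uniform \emph{directions} in $\mathbb{R}^n$ are, with probability $\gr 1-e^{-n}$, "almost orthonormal" in a quantitative sense sufficient to run the argument — or, more simply, by invoking the Gluskin–Milman inequality quoted above with $\lambda_i=\epsilon_i$ and a union bound over $S$, which directly gives a bound of the form $|S|\cdot(te^{(1-t^2)/2})^n$ and, for the right choice of $t$ calibrated against $t_{D,\delta}m(D)$ through the definition of $t_{D,\delta}$ and the bounds \eqref{eq.t_D.bounds}, is at most $2^{\delta n}\cdot (2^\delta e)^{-n}\cdot e^{n}\ls e^{-n}$ up to adjusting constants. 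Reconciling the "random rotation" heuristic with the "independent points" reality — i.e. choosing whether to prove this via Theorem \ref{intro-thm.2} plus a concentration argument for near-orthogonality, or directly via Gluskin–Milman — is where the real work of the proof lies; I would present it through Theorem \ref{intro-thm.2} to keep the paper self-contained, inserting a lemma that says independent uniform points of $B_2^n$ can be coupled, off an exponentially small set, with an orthogonal frame of $\ell_2$-length $\gr\tfrac12\sqrt n$ in a way that only decreases every $\|\cdot\|_D$.
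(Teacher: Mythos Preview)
Your proposal contains a genuine gap that you yourself correctly diagnose but do not repair. The polar-coordinate approach with $y_i = r_i e_i$ and a single Haar rotation $U$ fails exactly for the reason you state: $(U e_i)_{i=1}^n$ is an orthonormal frame, not an independent tuple of directions, so the law of $(r_i U e_i)_i$ is \emph{not} that of $n$ independent uniform points in $B_2^n$. Your suggested fix (a), a coupling of independent points with a near-orthonormal frame that only decreases every $\|\cdot\|_D$, is not carried out and is not obviously available; random directions are only approximately orthogonal in the $\ell_2$ sense, and there is no monotonicity principle of the type you describe for an arbitrary norm $\|\cdot\|_D$. Your suggested fix (b), a direct application of Gluskin--Milman with $\lambda_i=\epsilon_i$ and the target body $D$, only yields (after the volume normalization $|D|=|V_i|$) a bound of the form in Corollary~\ref{corol.signed.norm.lower}, i.e.\ a threshold $\simeq \sqrt{n}\,(|B_2^n|/|D|)^{1/n}$; by \eqref{eq.t_D.bounds} this is precisely the \emph{lower} bound for $t_{D,\delta}m(D)$, so you would recover only the volume-ratio estimate and not the Gaussian small-ball quantity $t_{D,\delta}$ that the theorem is about.

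The missing idea is a one-line use of the rotational invariance of the uniform measure on $B_2^n$, applied to the \emph{genuine} independent points rather than to a fixed orthonormal frame. If $(x_i)_{i=1}^n$ are i.i.d.\ uniform in $B_2^n$ and $U$ is an independent Haar rotation, then $(Ux_i)_{i=1}^n$ has the same law as $(x_i)_{i=1}^n$; hence
\[
\mathbb{P}_{(x_i)}\!\left(\exists\,\epsilon\in S:\ \Big\|\textstyle\sum_i \epsilon_i x_i\Big\|_D\le t\,m(D)\right)
=\mathbb{P}_{(x_i),U}\!\left(\exists\,\epsilon\in S:\ \Big\|\textstyle\sum_i \epsilon_i U x_i\Big\|_D\le t\,m(D)\right).
\]
Now condition on $(x_i)$. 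Apply Gluskin--Milman with $K=D=B_2^n$ (this is where it actually enters) and a union bound over all $2^n$ sign vectors to get the event $A=\{\|\sum_i\epsilon_i x_i\|_2\ge\frac{1}{10}\sqrt{n}\ \text{for all }\epsilon\}$ with $\mathbb{P}(A^c)<e^{-n}$. For each fixed $(x_i)\in A$, the hypothesis of Theorem~\ref{intro-thm.2} (equivalently Proposition~\ref{prop:relaxed}) holds with $\tau=\tfrac{1}{10}$, so the inner $\nu_n$-probability of bad $U$ is at most $2|S|\,\gamma_n(20t\,m(D)D)$. Taking $t=\tfrac{1}{10}t_{D,\delta}$ and using the definition of $t_{D,\delta}$ gives $2|S|\,\gamma_n(2t_{D,\delta}m(D)D)\le 2\cdot 2^{\delta n}(2^\delta e)^{-n}=2e^{-n}$, and adding back $\mathbb{P}(A^c)$ yields the bound $3e^{-n}$. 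No coupling or near-orthogonality lemma is needed.
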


This theorem may be viewed as an extension of Hajela's result: the $\ell_{\infty }^n$-norm is replaced by an arbitrary norm
on ${\mathbb R}^n$ and the statement holds for a random choice of vectors in the Euclidean unit ball. In this context, the role of the parameter $t_{D,\delta}$ in the statement of our results becomes now more transparent:
Since, by \eqref{eq.t_D.bounds},
$$t_{D,\delta }m(D)\simeq t_{D,\delta }\sqrt{n}M(D)\simeq t_{D,\delta }M(D){\rm vrad}(D) \,|D|^{-1/n},$$
where $M(D) = \int_{S^{n-1}}\|x\|_D\,d\sigma (x)$ and ${\rm vrad}(D)=(|D|/|B_2^n|)^{1/n}$, and since $M(D){\rm vrad}(D)\gr 1$
(this is a simple consequence of H\"{o}lder's inequality), we see that Theorem \ref{general-thm} provides information
stronger than the one from \eqref{eq:ban-lower} provided that $M(D){\rm vrad}(D)\gg 1$ and/or $t_{D,\delta }\simeq 1$. This is
the case for the $\|\cdot\|_{\infty }$-norm and it would be interesting to provide further examples with sharp estimates.

The function $t\mapsto\gamma_n(2t\cdot m(D)D)$ that appears in the definition of $t_{D,\delta}$ above has been studied
in \cite{Klartag-Vershynin-2007}, \cite{Latala-Oleszkiewicz-2005} and \cite{Paouris-Valettas-2017} and elsewhere.
A sample of estimates is the following:
\begin{itemize}
\item In \cite{Klartag-Vershynin-2007} it is proved that for every $0<t<\frac{1}{2}$ one has
$$\gamma_n(\{x:\|x\|_D\ls t\cdot m(D)\})\ls\frac{1}{2}t^{d(D)},$$ where
$$d(D)=\min\left\{ n,-\log\gamma_n\left (\frac{m(D)}{2}D\right )\right\},$$
(see \cite[Theorem~3.1]{Paouris-Valettas-2017} for this exact
formulation of the result).
\item In \cite{Latala-Oleszkiewicz-2005} it is proved that if $\gamma_n(D)\ls\frac{1}{2}$ then for every $0<t<\frac{1}{2}$ one has
$\gamma_n(tD)\ls (2t)^{r^2(D)/4}\gamma_n(D)$ where $r(D)$ is the inradius of $D$.
\item In \cite{Paouris-Valettas-2017} it is proved that for every $0<t<\frac{1}{2}$ one has $\gamma_n(\{x:\|x\|_D\ls t\cdot m(D)\})\ls\frac{1}{2}t^{c/\beta (D)}$, where
$$\beta (D)=\frac{{\rm Var}_{\gamma_n}(\|\cdot\|_D)}{M^2(D)}$$
and $c>0$ is an absolute constant.
\end{itemize}
In this note we discuss the estimates that can be derived from the above in special instances, as for example when $D$ is an $\ell_p^n$ ball.

Finally, the argument in the proof of Theorem \ref{general-thm} can be further
generalized to the case where $x_1,\ldots ,x_n$ are independent random points chosen uniformly from any symmetric convex body.

\begin{theorem}\label{intro-thm.5}
There exists an absolute constant $c>0$ such that the following holds: Let $\delta\in (0,1)$, $D$ be a symmetric convex body in $\mathbb{R}^n$ and $S\subseteq \{-1,1\}^n$ with $|S|\ls 2^{\delta n}$. For any symmetric convex body $K$ in ${\mathbb R}^n$ with $|K|=|B_2^n|$ we may find ${\cal U}\subseteq O(n)$ with $\nu_n ({\cal U})>1-3e^{-n/2}$
such that, for any $U\in {\cal U}$,
\begin{equation*}\mathbb{P}\left( (z_i)_{i=1}^n \subseteq U(K)\times\cdots\times U(K) : \left\| \sum_{i=1}^n \epsilon_iz_i \right\|_D \ls ct_{D,\delta}m(D) \hbox{,
for some } \epsilon\in S \right)
\ls e^{-n/2}.\end{equation*}
\end{theorem}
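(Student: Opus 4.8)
The plan is to reduce Theorem~\ref{intro-thm.5} to Theorem~\ref{intro-thm.2}, the point being that a random configuration of $n$ points in $K$ satisfies the $\ell_2^n$-hypothesis of that theorem with overwhelming probability, after which a Fubini--Markov argument upgrades the resulting joint statement over $(U,(y_i))$ to one valid for most $U\in O(n)$. Fix once and for all an absolute constant $c_1\in(0,\tfrac12)$ small enough that $2c_1e^{(1-c_1^2)/2}\ls e^{-1}$ (for instance $c_1=\tfrac1{20}$), and take $c:=c_1/2$ in the statement. Let $y_1,\dots,y_n$ be independent and uniform in $K$. First I would apply the Gluskin--Milman estimate recalled above with the role of $D$ there played by $B_2^n$, with $V_1=\cdots=V_n=K$ (legitimate precisely because $|K|=|B_2^n|$), and with $\lambda_i=\epsilon_i$; this gives, for each fixed $\epsilon\in\{-1,1\}^n$,
\[
\mathbb{P}\Big(\Big\|\sum_{i=1}^n\epsilon_i y_i\Big\|_2\ls c_1\sqrt n\Big)\ls\big(c_1e^{(1-c_1^2)/2}\big)^n .
\]
A union bound over the $2^n$ sign patterns, together with the choice of $c_1$, shows that the event
\[
\mathcal A:=\Big\{(y_i)_{i=1}^n\subseteq K:\ \min_{\epsilon_i=\pm1}\Big\|\sum_{i=1}^n\epsilon_i y_i\Big\|_2> c_1\sqrt n\Big\}
\]
has $\mathbb{P}(\mathcal A)\gr 1-e^{-n}$.

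I expect this small-ball step to be the crux of the matter: Theorem~\ref{intro-thm.2} requires the $\ell_2^n$ lower bound to hold \emph{simultaneously} for all $2^n$ sign vectors, and the Gluskin--Milman bound $(te^{(1-t^2)/2})^n$ is just strong enough, for a suitable absolute value $t=c_1$, to absorb the factor $2^n$ coming from the union bound; this is also the only place where the normalization $|K|=|B_2^n|$ is used. For every fixed configuration $(y_i)\in\mathcal A$, Theorem~\ref{intro-thm.2} applied with $\tau=c_1$ to the $n$-tuple $y_1,\dots,y_n$ (and the given $D,\delta,S$) then produces a set $\mathcal U_{(y_i)}\subseteq O(n)$ with $\nu_n(\mathcal U_{(y_i)})\gr 1-e^{-n}$ such that $\big\|\sum_{i=1}^n\epsilon_i U(y_i)\big\|_D\gr c_1 t_{D,\delta}m(D)>c\,t_{D,\delta}m(D)$ for all $U\in\mathcal U_{(y_i)}$ and all $\epsilon\in S$ (using $t_{D,\delta}m(D)>0$, which holds by \eqref{eq.t_D.bounds}).

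Let $\Phi(U,(y_i))\in\{0,1\}$ be the indicator of the event $\{\exists\,\epsilon\in S:\ \|\sum_{i=1}^n\epsilon_i U(y_i)\|_D\ls c\,t_{D,\delta}m(D)\}$, where $\mathbb{P}$ now denotes the product of $n$ copies of the uniform measure on $K$. Integrating first in $U$ and splitting $K^n$ into $\mathcal A$ and $\mathcal A^c$ (bounding $\Phi\ls1$ on the latter), the two estimates above yield
\[
\int_{O(n)}\Big(\int_{K^n}\Phi\,d\mathbb{P}\Big)\,d\nu_n
=\int_{K^n}\Big(\int_{O(n)}\Phi\,d\nu_n\Big)\,d\mathbb{P}
\ls \mathbb{P}(\mathcal A^c)+\sup_{(y_i)\in\mathcal A}\nu_n\big(\mathcal U_{(y_i)}^{\,c}\big)\ls 2e^{-n}.
\]
Put $\psi(U):=\int_{K^n}\Phi(U,(y_i))\,d\mathbb{P}((y_i))$, so that $\int_{O(n)}\psi\,d\nu_n\ls 2e^{-n}$, and set $\mathcal U:=\{U\in O(n):\psi(U)\ls e^{-n/2}\}$; Markov's inequality gives $\nu_n(\mathcal U)\gr 1-2e^{-n/2}>1-3e^{-n/2}$. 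Finally, since an orthogonal map is volume preserving, a point uniform in $U(K)$ has the same law as $U(y)$ with $y$ uniform in $K$, so if $z_1,\dots,z_n$ are independent uniform in $U(K)$ then $\sum_{i=1}^n\epsilon_i z_i$ and $\sum_{i=1}^n\epsilon_i U(y_i)$ have the same distribution; hence for every $U\in\mathcal U$,
\[
\mathbb{P}\Big((z_i)_{i=1}^n\subseteq U(K)\times\cdots\times U(K):\ \Big\|\sum_{i=1}^n\epsilon_i z_i\Big\|_D\ls c\,t_{D,\delta}m(D)\ \text{for some}\ \epsilon\in S\Big)=\psi(U)\ls e^{-n/2},
\]
which is the assertion. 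Aside from the small-ball step, the remaining points needing care are the bookkeeping of the two exponentially small exceptional events and the Markov step, which trades a loss in the exponent (from rate $e^{-n}$ to rate $e^{-n/2}$) for the passage from the joint statement to one holding for most rotations $U$.
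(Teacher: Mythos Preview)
Your proof is correct and follows essentially the same route as the paper: establish the simultaneous $\ell_2$ lower bound for all sign vectors via the Gluskin--Milman estimate (the paper's Corollary~\ref{corol.signed.norm.lower}), feed this into the random-rotation small-ball bound, then use Fubini and Markov to pass from a joint statement to one holding for most $U\in O(n)$. The only cosmetic difference is that you invoke Theorem~\ref{intro-thm.2} as a black box on the good event $\mathcal A$, whereas the paper unwinds it to Proposition~\ref{prop:relaxed} inside the integration; since Theorem~\ref{intro-thm.2} is itself derived from Proposition~\ref{prop:relaxed}, the two arguments are the same in substance.
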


\section{An improved version of Hajela's result}\label{sec:hajela.ext}

In this section we present the proof of Theorem \ref{intro-thm.1}. In what follows, $e_1,\ldots,e_n$ is the standard basis of $\mathbb{R}^n$. We denote by $S^{n-1}$ the Euclidean sphere in $\mathbb{R}^n$, and by $\sigma$ the unique rotationally invariant probability measure on $S^{n-1}$. Recall that $\sigma$ can be defined via the Haar probability measure $\nu_n$ on the orthogonal group $O(n)$ as follows: For every measurable $A\subseteq S^{n-1}$ we have
\[
\sigma(A) = \nu_n(\{U\in O(n) : U(x) \in A\}),
\]
for an arbitrary $x\in S^{n-1}$. We need a small ball probability estimate for the $\ell_{\infty }^n$-norm, which is given in the next lemma. It provides a bound whose behaviour for small values of $t$ will play a crucial role in the sequel;
in the case of $\|\cdot\|_{\infty }$ this type of behaviour has been observed in various works (see e.g. \cite{Szankowski-1974} and \cite{VMilman-Schechtman-1995}).
We provide a direct and short proof of the exact statement that we need.

\begin{lemma}\label{lem:hajela-1}There exists an absolute constant $c\in (0,1)$ such that, for any $n\gr 1$ and $\delta\in \left (\frac{1}{n},\frac{1}{4}\right )$,
\begin{equation*}\sigma \left(\left\{ \theta \in S^{n-1} : \|\theta\|_\infty \ls \frac{c\sqrt{\log (e/\delta )}}{\sqrt{n}}\right\} \right)
< 2^{-\delta n}.\end{equation*}
\end{lemma}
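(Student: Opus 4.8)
The plan is to bound the measure of the set $A_t = \{\theta\in S^{n-1}:\|\theta\|_\infty\le t\}$ by passing to the Gaussian picture and then using independence of the coordinates. Write $g=(g_1,\dots,g_n)$ for a standard Gaussian vector in $\mathbb{R}^n$; then $\theta = g/\|g\|_2$ is uniformly distributed on $S^{n-1}$, so $\sigma(A_t)=\mathbb{P}(\|g\|_\infty\le t\|g\|_2)$. Since $\|g\|_2$ concentrates sharply around $\sqrt n$, I would split on the event $E=\{\|g\|_2\ge\tfrac12\sqrt n\}$, which has probability at least $1-e^{-cn}$ by a standard $\chi^2$-deviation bound, and estimate
\[
\sigma(A_t)\le \mathbb{P}\bigl(\|g\|_\infty\le \tfrac{t\sqrt n}{2}\bigr)+\mathbb{P}(E^c)
= \Bigl(\mathbb{P}\bigl(|g_1|\le \tfrac{t\sqrt n}{2}\bigr)\Bigr)^{n}+e^{-cn}.
\]
Now set $t = \tfrac{c_0\sqrt{\log(e/\delta)}}{\sqrt n}$, so that $\tfrac{t\sqrt n}{2}=\tfrac{c_0}{2}\sqrt{\log(e/\delta)}=:s$. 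The first term becomes $\bigl(\mathbb{P}(|g_1|\le s)\bigr)^n$.

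The key elementary estimate is a good upper bound for $p(s):=\mathbb{P}(|g_1|\le s)$ when $s$ is (possibly) large — here $s$ can be as big as $\approx\sqrt{\log n}$. Using $\mathbb{P}(|g_1|>s)\ge c\,\frac{s}{1+s^2}e^{-s^2/2}$ (the standard Gaussian tail lower bound), one gets $p(s)\le 1 - c\,\frac{s}{1+s^2}e^{-s^2/2}\le \exp\!\bigl(-c\,\frac{s}{1+s^2}e^{-s^2/2}\bigr)$, hence
\[
\bigl(p(s)\bigr)^n \le \exp\!\Bigl(-c\,n\,\tfrac{s}{1+s^2}\,e^{-s^2/2}\Bigr).
\]
With $s=\tfrac{c_0}{2}\sqrt{\log(e/\delta)}$ we have $e^{-s^2/2}=(e/\delta)^{-c_0^2/8}$ and the polynomial factor $\tfrac{s}{1+s^2}$ is bounded below by $c/\sqrt{\log(e/\delta)}\ge c'\sqrt{\delta}$ (using $1/n<\delta$, so $\log(e/\delta)\le\log(en)$ is at worst logarithmic, a loss we can absorb). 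Choosing $c_0$ small enough that $c_0^2/8<1$, the exponent $c\,n\,\tfrac{s}{1+s^2}e^{-s^2/2}$ dominates $n\delta\log 2$ for all $\delta\in(1/n,1/4)$ and $n\ge1$, giving $\bigl(p(s)\bigr)^n\le \tfrac12 2^{-\delta n}$; the residual term $e^{-cn}$ is likewise $\le\tfrac12 2^{-\delta n}$ since $\delta<1/4$ and $c$ can be taken with $c>\tfrac14\log2$ after shrinking the range appropriately (or simply by noting $2^{-\delta n}\ge 2^{-n/4}$). Adding the two halves yields the claimed bound $\sigma(A_t)<2^{-\delta n}$.

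The main obstacle is purely quantitative bookkeeping: one must track how the bound $\bigl(p(s)\bigr)^n\le\exp(-c\,n\,s e^{-s^2/2}/(1+s^2))$ behaves \emph{uniformly} across the whole range $\delta\in(1/n,1/4)$ — in particular verifying that the polynomial-in-$s$ loss (which is really a loss of size $\mathrm{poly}(\log(e/\delta))$, and since $\delta>1/n$ is at worst $\mathrm{poly}(\log n)$) never swamps the gain $(e/\delta)^{-c_0^2/8}$ against the target $2^{-\delta n}$, and pinning down a single absolute constant $c_0$ that works. A clean way to organize this is to check the two extreme regimes separately: when $\delta$ is bounded away from $0$ (say $\delta\in[\tfrac1{100},\tfrac14]$) the statement is just a fixed-exponent small-ball bound $\sigma(A_{c_0/\sqrt n})<2^{-\delta n}$ that holds for $c_0$ small by the same computation with $s=O(1)$; and when $\delta\to0$ one has $s\to\infty$ and the exponential factor $e^{-s^2/2}$ does all the work while the polynomial factor is negligible. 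Interpolating, or just taking the minimum of the two constants, finishes the proof.
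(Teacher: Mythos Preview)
Your splitting step has the inequality backwards. You condition on $E=\{\|g\|_2\gr\tfrac12\sqrt n\}$ and then write $\sigma(A_t)\ls \mathbb{P}(\|g\|_\infty\ls \tfrac{t\sqrt n}{2})+\mathbb{P}(E^c)$, but a \emph{lower} bound on $\|g\|_2$ makes the event $\{\|g\|_\infty\ls t\|g\|_2\}$ \emph{larger}, not smaller: on $E$ one has $t\|g\|_2\gr \tfrac{t\sqrt n}{2}$, so the containment goes the wrong way. The fix is trivial --- split instead on $E'=\{\|g\|_2\ls 2\sqrt n\}$, so that on $E'$ the event $\{\|g\|_\infty\ls t\|g\|_2\}$ is contained in $\{\|g\|_\infty\ls 2t\sqrt n\}$, and $\mathbb{P}((E')^c)\ls e^{-cn}$ by the upper-tail $\chi^2$ bound. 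This corrected step is exactly the Klartag--Vershynin inequality $\sigma(S^{n-1}\cap A)\ls 2\gamma_n(2\sqrt n\,A)$ that the paper quotes.

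After this correction your approach coincides with the paper's, modulo one simplification worth adopting: rather than the Mills-ratio bound $\mathbb{P}(|g_1|>s)\gr c\,\tfrac{s}{1+s^2}\,e^{-s^2/2}$, the paper uses $1-\Phi(s)\gr \tfrac{1}{2\sqrt2}\,e^{-s^2}$ (obtained from $(t+s)^2\ls 2t^2+2s^2$ inside the tail integral), which carries no polynomial prefactor at all. This eliminates precisely the bookkeeping you flag at the end: with $s=\tfrac12\sqrt{\log(e/\delta)}$ one gets $\sigma(A_t)\ls 2\exp\!\bigl(-\tfrac{n}{\sqrt2}(\delta/e)^{1/4}\bigr)$, and checking that $(\delta/e)^{1/4}>2\sqrt2\,\delta\log 2$ for $\delta<\tfrac14$ is immediate, no case split required.
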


\begin{proof}
We use the fact (see \cite{Klartag-Vershynin-2007} for its simple proof) that if $A$ is a symmetric convex body in ${\mathbb R}^n$ then
\begin{equation}\label{eq.sigma.to.gamma}\sigma (S^{n-1}\cap A)\ls 2\gamma_n(2\sqrt{n}A),\end{equation}
to write
\begin{align*}\sigma \left(\left\{ \theta \in S^{n-1} : \|\theta\|_\infty \ls \frac{s}{2\sqrt{n}}\right\} \right)
&\ls 2\gamma_n\left(\left\{ x\in {\mathbb R}^n: \|x\|_\infty \ls s\right\} \right) \\
&= 2(1-2(1-\Phi(s)))^n \ls 2\exp(-2n(1-\Phi(s))),
\end{align*}
where, as usual, $\Phi$ stands for the cumulative distribution function of the standard normal distribution, that is, $\Phi(x) = (2\pi)^{-1/2}\int_{-\infty}^x e^{-t^2/2}\,dt$. Using the inequality $(t+s)^2\ls 2(t^2+s^2)$, we have that, for any $s>0$,
\[
1-\Phi(s) = \frac{1}{\sqrt{2\pi}}\int_0^\infty e^{-\frac{(t+s)^2}{2}} \,dt \gr \frac{e^{-s^2}}{\sqrt{2\pi}}\int_0^\infty e^{-t^2} \,dt = \frac{e^{-s^2}}{2\sqrt{2}}.
\]
It follows that
\[
\sigma \left(\left\{ \theta \in S^{n-1} : \|\theta\|_\infty \ls \frac{s}{2\sqrt{n}}\right\} \right) \ls 2\exp\left(-\frac{n}{\sqrt{2}}e^{-s^2}\right).
\]
Finally, if $\delta\in\left(\frac{1}{n},\frac{1}{4}\right)$ then choosing $s=\frac{1}{2}\sqrt{\log\left(\frac{e}{\delta} \right)}$ we get
\[
2\exp\left(-\frac{n}{\sqrt{2}}e^{-s^2}\right) = 2\exp\left(-\frac{n}{\sqrt{2}}\left(\frac{\delta}{e}\right)^{1/4}\right) < 2^{-\delta n},
\]
which proves the desired statement.
\end{proof}

\medskip

\begin{proof}[Proof of Theorem \ref{intro-thm.1}]
The idea of the proof is the same as in Hajela's paper. We consider orthonormal systems obtained as random rotations of the standard basis
$e_1,\ldots,e_n$ of $\mathbb{R}^n$.
Let $\delta\in (n^{-1},1/4)$. Since, for any $\epsilon = (\epsilon_1,\ldots,\epsilon_n) \in \{-1,1\}^n$, $n^{-1/2}\sum_{i=1}^n \epsilon_i e_i \in S^{n-1}$, by the definition of the measure $\sigma$ on $S^{n-1}$ it follows,
taking $\alpha =c\sqrt{\log (e/\delta )}$ where $c $ is the constant in Lemma \ref{lem:hajela-1}, that
\begin{align*}
&\nu_n\left(\left\{ U\in O(n) : \left\|U\left(\sum_{i=1}^n \epsilon_i e_i\right)\right\|_\infty \ls c \sqrt{\log (e/\delta )}\right\} \right) \\
&\hspace*{3cm}= \nu_n\left(\left\{ U\in O(n) : \left\|U\left(\frac{\sum_{i=1}^n \epsilon_ie_i}{\sqrt{n}}\right)\right\|_\infty \ls \frac{c \sqrt{\log (e/\delta )}}{\sqrt{n}}\right\} \right)\\
&\hspace*{3cm}= \sigma \left(\left\{ \theta \in S^{n-1} : \|\theta\|_\infty \ls \frac{c\sqrt{\log (e/\delta )}}{\sqrt{n}}\right\} \right).
\end{align*}
Applying Lemma \ref{lem:hajela-1} we get
\[
\sigma \left(\left\{ \theta \in S^{n-1} : \|\theta\|_\infty \ls \frac{c\sqrt{\log (e/\delta )}}{\sqrt{n}}\right)\right )
< 2^{-\delta n}.
\]
This shows that
\begin{equation}\label{eq.1}
\nu_n\left(\left\{ U\in O(n) : \left\|U\left(\sum_{i=1}^n \epsilon_i e_i\right)\right\|_\infty \ls c\sqrt{\log (e/\delta )}\right\} \right) < 2^{-\delta n}.
\end{equation}
Now let $S\subseteq \{-1,1\}^n$ with $S\ls 2^{\delta n}$. Then, by the union bound and \eqref{eq.1},
\begin{align*}
\nu_n &\left( \left\{ U\in O(n) : \left\| \sum_{i=1}^n \epsilon_i U(e_i)\right\|_\infty \gr c\sqrt{\log (e/\delta )} \hbox{\,, for every } \epsilon\in S \right\} \right) =\\
                                 &\hspace{80pt}= 1 - \nu_n\left(\left\{ U\in O(n) : \left\|\sum_{i=1}^n \epsilon_i U(e_i)\right\|_\infty \ls c\sqrt{\log (e/\delta )} \hbox{\,, for some } \epsilon\in S\right\}\right)\\
                                  &\hspace{80pt}\gr 1 - \sum_{\epsilon \in S} \nu_n\left(\left\{ U\in O(n) : \left\|\sum_{i=1}^n \epsilon_i U(e_i)\right\|_\infty \ls c\sqrt{\log (e/\delta )} \right\}\right) \\
                                  &\hspace{80pt}> 1 - |S|\cdot2^{-\delta n} \gr 0.
\end{align*}
We thus deduce that there exists some $U_0 \in O(n)$ such that, if we set $x_i=U_0(e_i)$, $i=1,\ldots,n$, then
\[
\left\|\sum_{i=1}^n \epsilon_i x_i \right\|_\infty \gr c\sqrt{\log (e/\delta )},
\]
for any $\epsilon=(\epsilon_1,\ldots,\epsilon_n)\in S$, given that $1/n<\delta<1/4$.

Finally, note that in the case $\delta\in(1/4,1)$ the wanted result holds trivially, since for any $U\in O(n)$ and every $\epsilon\in \{-1,1\}^n$,
\[
\left\| \sum_{i=1}^n \epsilon_iUe_i \right\|_\infty \gr 1 \gr c\sqrt{\log (e/\delta)}
\]
is valid, for a suitable absolute constant $c>0$.
\end{proof}

\section{Signed sums of random vectors from a convex body}\label{sec:random.rot}

The argument used for the proof of Theorem \ref{intro-thm.1} motivates us to consider a more general setting. As a first step, we let the $\ell_{\infty}^n$ norm be replaced by an arbitrary norm in $\mathbb{R}^n$ and relax our assumptions on the choice of the vectors $x_1,\ldots,x_n\in B_2^n$. As a second step, we will consider a further generalization, letting $x_1,\ldots,x_n$ be chosen uniformly and independently from $B_2^n$, or any convex body $K$.

\subsection{Norms of signed sums of vectors}\label{subsec:3.1}

Let $D$ be a symmetric convex body in ${\mathbb R}^n$.  We consider the median
$m(D)$ of $\|Z\|_D$ where $Z\sim N(0,I_n)$ is a standard Gaussian random vector in ${\mathbb R}^n$.
Repeating the argument of the previous section we can show the following.

\begin{proposition}\label{prop:relaxed}Let $D$ be a symmetric convex body in ${\mathbb R}^n$ and $\tau >0$. Assume that the
vectors $x_1,\ldots ,x_n\in \mathbb{R}^n$ satisfy
\[
\left\|\sum_{i=1}^n \zeta_ix_i\right\|_2 \gr \tau\sqrt{n},
\]
for all $\zeta_i=\pm 1$. Then, for any $S\subseteq\{-1,1\}^n$ with
$$|S|\cdot \gamma_n\Big (\frac{2t}{\tau }m(D)D\Big )<1/2,$$
we may find ${\cal U}\subseteq O(n)$ with $\nu_n ({\cal U})>1- 2|S|\cdot \gamma_n\left (\frac{2t}{\tau }m(D)D\right )$
such that any $U\in {\cal U}$ satisfies
$$\left\|\sum_{i=1}^n\epsilon_iU(x_i)\right\|_D\gr t\cdot m(D)$$
for all $\epsilon =(\epsilon_1,\ldots ,\epsilon_n)\in S$.
\end{proposition}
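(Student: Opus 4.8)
The plan is to mimic exactly the proof of Theorem~\ref{intro-thm.1}, replacing the explicit small-ball estimate of Lemma~\ref{lem:hajela-1} by the transfer inequality \eqref{eq.sigma.to.gamma} together with the hypothesis on $\sum\zeta_ix_i$. First I would fix $\epsilon=(\epsilon_1,\dots,\epsilon_n)\in S$ and set $v_\epsilon=\sum_{i=1}^n\epsilon_ix_i$, so that by assumption $\|v_\epsilon\|_2\gr\tau\sqrt{n}$. For a random $U\in O(n)$, the vector $Uv_\epsilon$ is distributed as $\|v_\epsilon\|_2\,\theta$ with $\theta\sim\sigma$ on $S^{n-1}$, hence
\[
\nu_n\!\left(\left\{U: \Big\|\sum_{i=1}^n\epsilon_iU(x_i)\Big\|_D\ls t\,m(D)\right\}\right)
=\sigma\!\left(\left\{\theta\in S^{n-1}: \|\theta\|_D\ls \frac{t\,m(D)}{\|v_\epsilon\|_2}\right\}\right)
\ls \sigma\!\left(\left\{\theta: \|\theta\|_D\ls \frac{t\,m(D)}{\tau\sqrt{n}}\right\}\right),
\]
where in the last step I use that $\{\|\cdot\|_D\ls a\}\subseteq\{\|\cdot\|_D\ls b\}$ for $a\ls b$, monotonicity of $\sigma$, and $\|v_\epsilon\|_2\gr\tau\sqrt{n}$.

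Next I would apply \eqref{eq.sigma.to.gamma} to the symmetric convex body $A=\frac{t\,m(D)}{\tau\sqrt{n}}D$ (i.e.\ $A=\{x:\|x\|_D\ls \frac{t\,m(D)}{\tau\sqrt{n}}\}$), obtaining
\[
\sigma\!\left(\left\{\theta\in S^{n-1}: \|\theta\|_D\ls \frac{t\,m(D)}{\tau\sqrt{n}}\right\}\right)
\ls 2\gamma_n\!\left(2\sqrt{n}\cdot\frac{t\,m(D)}{\tau\sqrt{n}}D\right)
= 2\gamma_n\!\left(\frac{2t}{\tau}m(D)D\right).
\]
Thus for each fixed $\epsilon\in S$ the ``bad'' set of rotations has $\nu_n$-measure at most $2\gamma_n\!\big(\frac{2t}{\tau}m(D)D\big)$. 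Taking a union bound over the $|S|$ choices of $\epsilon$, the set ${\cal U}$ of rotations $U$ for which $\big\|\sum_i\epsilon_iU(x_i)\big\|_D> t\,m(D)$ holds simultaneously for all $\epsilon\in S$ has measure
\[
\nu_n({\cal U})\gr 1-2|S|\,\gamma_n\!\left(\frac{2t}{\tau}m(D)D\right),
\]
which is the claimed bound; and under the hypothesis $|S|\cdot\gamma_n\big(\frac{2t}{\tau}m(D)D\big)<1/2$ this is strictly positive, so ${\cal U}$ is in particular nonempty.

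There is no real obstacle here: the proposition is essentially a repackaging of the argument of Section~\ref{sec:hajela.ext} with the norm $\|\cdot\|_\infty$ replaced by $\|\cdot\|_D$ and Lemma~\ref{lem:hajela-1} replaced by the generic bound \eqref{eq.sigma.to.gamma}. The only points requiring a little care are (i) the scaling, i.e.\ correctly tracking the factor $2\sqrt{n}$ from \eqref{eq.sigma.to.gamma} against the normalization $1/(\tau\sqrt{n})$, so that $\sqrt{n}$ cancels and one is left with $\frac{2t}{\tau}m(D)D$; and (ii) the reduction from a fixed-norm sphere of radius $\|v_\epsilon\|_2\gr\tau\sqrt{n}$ to the unit sphere, which uses rotational invariance of $\gamma_n$ (equivalently, the homogeneity of $\|\cdot\|_D$ and monotonicity of $\sigma$ on sublevel sets). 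Everything else is a verbatim union bound, exactly as in the proof of Theorem~\ref{intro-thm.1}. Note also that the extra bound \eqref{eq.t_D.bounds} claimed in the same subsection is proved separately; for the present proposition it is not needed, though choosing $t=t_{D,\delta}\tau$ and $|S|\ls 2^{\delta n}$ recovers Theorem~\ref{intro-thm.2} as the main application.
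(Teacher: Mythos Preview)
Your proof is correct and follows essentially the same approach as the paper's own argument: fix $\epsilon$, normalize $v_\epsilon$ to the sphere, use the definition of $\sigma$ via $\nu_n$, bound by monotonicity using $\|v_\epsilon\|_2\gr\tau\sqrt{n}$, apply \eqref{eq.sigma.to.gamma}, and finish with a union bound over $S$. The only cosmetic difference is that you phrase the normalization as ``$Uv_\epsilon$ is distributed as $\|v_\epsilon\|_2\theta$'' while the paper writes out the chain of $\nu_n$-probabilities explicitly; the content is identical.
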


\begin{proof}
Let $x_1,\ldots ,x_n\in \mathbb{R}^n$ be as in the statement above, and fix $\epsilon_1,\ldots,\epsilon_n$. Normalizing by $\left\|\sum_{i=1}^n \epsilon_ix_i\right\|_2$ and using the fact that the latter is greater than $\tau\sqrt{n}$, we get
\begin{align*}
& \nu_{n} \left( \left\{ U\in O(n) : \left\|\sum_{i=1}^n \epsilon_i U(x_i)\right\|_D \ls t\cdot m(D) \right\}\right) \\
&\hspace*{3cm} \ls \nu_{n} \left( \left\{ U\in O(n) : \left\|U\left( \frac{\sum_{i=1}^n \epsilon_ix_i}{\left\|\sum_{i=1}^n \epsilon_ix_i\right\|_2}\right)\right\|_D
\ls \frac{t\cdot m(D)}{\tau\sqrt{n}}\right\}\right)\\
&\hspace*{3cm} = \sigma\left(\left\{ \theta\in S^{n-1} : \|\theta\|_D \ls \frac{t\cdot m(D)}{\tau\sqrt{n}} \right\}\right)\\
&\hspace*{3cm} \ls 2\gamma_n\Big (\frac{2t}{\tau }m(D)D\Big ),
\end{align*}
where in the last step, we make use of \eqref{eq.sigma.to.gamma}. Therefore,
\begin{align*}
\nu_n &\left( \left\{ U\in O(n) : \left\| \sum_{i=1}^n \epsilon_i U(x_i)\right\|_D \gr t\cdot m(D) \hbox{\,, for every } \epsilon\in S \right\} \right) \\
&\hspace{100pt}\gr 1 - \sum_{\epsilon\in S}\nu_n\left(\left\{ U\in O(n) : \left\|\sum_{i=1}^n \epsilon_i U(x_i)\right\|_D \ls t\cdot m(D) \right\}\right) \\
&\hspace{100pt}> 1 - 2|S|\cdot \gamma_n\left (\frac{2t}{\tau }m(D)D\right ).
\end{align*}
This proves the claim of the proposition.
\end{proof}

Theorem \ref{intro-thm.2} now is a direct corollary of Proposition \ref{prop:relaxed}.

\begin{proof}[Proof of Theorem \ref{intro-thm.2}]
Let $t=\tau t_{D,\delta}$ and $S\subseteq\{-1,1\}^n$ with $|S|\ls 2^{\delta n}$.
By the definition of $t_{D,\delta}$ we have that $|S|\gamma_n(\frac{2t}{\tau}m(D)D)< e^{-n}$.
We can thus apply Proposition \ref{prop:relaxed} to get ${\cal U}\subseteq O(n)$ such that $\nu_n({\cal U})\gr 1-e^{-n}$ and
\[
\left\|\sum_{i=1}^n \epsilon_iUx_i\right\|_D \gr t\cdot m(D) =\tau t_{D,\delta}m(D)
\]
for every $U\in {\cal U}$ and all $(\epsilon_1,\ldots ,\epsilon_n)\in S$, which is the statement of the theorem.
\end{proof}

Before we proceed, let us briefly explain at this point the general bounds on the parameter $t_{D,\delta}$, stated in the introduction.

\begin{proof}[Proof of \eqref{eq.t_D.bounds}]
For the upper bound, it is straightforward by the definition of the median that
\[
\gamma_n(m(D)D) = \gamma_n(\|Z\|_D\ls m(D)) \gr \frac{1}{2} > (2^\delta e)^{-n},
\]
hence $t_{D,\delta}\ls \frac{1}{2}$. For the lower bound, since, integrating in polar coordinates, $\int_{S^{n-1}} \|x\|_D^{-n} \,d\sigma(x) = \frac{|D|}{|B_2^n|}$, Markov's inequality implies that
\[
\sigma\left( \|x\|_D \ls e^{-\eta}\left(\frac{|B_2^n|}{|D|} \right)^{1/n}\right) \ls e^{-\eta n}
\]
holds for every $\eta>0$. We can relate the latter to the Gaussian measure; using the same argument as in the proof of \cite[Lemma 2.1]{Klartag-Vershynin-2007}, one can prove that for any $a\gr 1$,
\[
\gamma_n\left(\frac{1}{a}\sqrt{n}e^{-\eta}\left(\frac{|B_2^n|}{|D|} \right)^{1/n}D\right) \ls \sigma\left( \|x\|_D \ls e^{-\eta}\left(\frac{|B_2^n|}{|D|} \right)^{1/n}\right) +\gamma_n\left(\frac{1}{a}\sqrt{n}B_2^n\right) \ls e^{-\eta n} +\gamma_n\left(\frac{1}{a}\sqrt{n}B_2^n\right).
\]
We can bound the second term using, e.g. \cite[Proposition 2.2]{Barvinok}, to get $\gamma_n\left(\frac{1}{a}\sqrt{n}B_2^n\right) \ls a^{-n}\exp\left(\frac{n(a^2-1)}{2a^2}\right)$. Note that, for $\eta=\log(4e)$ we have $2e^{-\eta n}=2^{-n+1}(2e)^{-n}\ls (2^\delta e)^{-n}$ for any $\delta\in(0,1)$, so it suffices to have
\[
a^{-n}\exp\left(\frac{n(a^2-1)}{2a^2}\right) \ls (4e)^{-n} = e^{-\eta n},
\]
which is satisfied if $a$ is large enough ($a=20$ will do). Now, since $ \gamma_n\left(\frac{1}{80e}\sqrt{n}\left(\frac{|B_2^n|}{|D|} \right)^{1/n}D\right) \ls (2^\delta e)^{-n}$ for every $\delta\in(0,1)$, the definition of $t_{D,\delta}$ implies that
\[
t_{D,\delta} \gr \frac{1}{160e}\frac{\sqrt{n}}{m(D)}\left(\frac{|B_2^n|}{|D|} \right)^{1/n} \gr c_1 \frac{1}{|D|^{1/n}m(D)},
\]
for some absolute constant $c_1>0$.
\end{proof}

\subsection{Random points from convex bodies}

We will see now that, suitably normalized, the hypothesis on the norm of the sum $\zeta_1x_1+\cdots +\zeta_nx_n$ in the statement of
Proposition \ref{prop:relaxed} is satisfied with overwhelming probability when the $x_i$'s are chosen uniformly and independently at
random from the interior of a general symmetric convex body $K$. The required lower bound, which actually holds for any norm in
$\mathbb{R}^n$ in place of the Euclidean norm, will be deduced as a corollary of the following result of
Gluskin and V. Milman \cite{Gluskin-Milman-2004}:

\begin{proposition}[Gluskin-V.~Milman]\label{prop:gluskin}Let $D$ be a star body in ${\mathbb R}^n$ with $0\in {\rm int}(D)$
and $V_1,\ldots ,V_m$ be measurable subsets of ${\mathbb R}^n$ with $|D|=|V_1|=\cdots =|V_m|$. For any $\lambda_1,\ldots ,\lambda_m\in {\mathbb R}$
and any $0<t<1$ one has
\[
\mathbb{P}\left( (x_i)_{i=1}^m, x_i\in V_i: \left\|\sum_{i=1}^m \lambda_ix_i \right\|_D \ls t\Big (\sum_{i=1}^m\lambda_i^2\Big )^{1/2}\right) \ls
\left (te^{\frac{1-t^2}{2}}\right )^n .
\]
\end{proposition}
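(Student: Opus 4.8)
The plan is to deduce Proposition~\ref{prop:gluskin} from the sharp multivariate Young (Brascamp--Lieb) inequality, following the strategy of Gluskin and V.~Milman. First I would reduce to the case $\sum_{i=1}^m\lambda_i^2=1$ by homogeneity of $\|\cdot\|_D$, so that the event in question becomes $\{\|\sum_i\lambda_ix_i\|_D\ls t\}$, i.e. $\sum_i\lambda_ix_i\in tD$. Writing $f_i=\mathbbm{1}_{V_i}/|V_i|$ for the density of the uniform measure on $V_i$ and using the normalization $|V_i|=|D|$, the probability we must bound is
\[
\mathbb{P}=\int_{(\mathbb{R}^n)^m}\mathbbm{1}_{tD}\Big(\sum_{i=1}^m\lambda_ix_i\Big)\prod_{i=1}^m f_i(x_i)\,dx_i .
\]
The natural move is to apply the Brascamp--Lieb inequality in the rank-one / Barthe form associated with the linear maps $B_i\colon(\mathbb{R}^n)^m\to\mathbb{R}^n$, $(x_1,\ldots,x_m)\mapsto x_i$, together with the ``extra'' functional $\mathbbm{1}_{tD}(\sum_i\lambda_ix_i)$; the weights are forced by the relation $\sum_i\lambda_i^2=1$, which is exactly the condition making the geometric Brascamp--Lieb datum valid with Gaussian extremizers. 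This yields a bound of the form $\mathbb{P}\ls\big(\text{Young constant}\big)\cdot\big(\int_{tD}g\big)\cdot\prod_i\|f_i\|_\infty^{1-\lambda_i^2}\cdots$, but the cleanest route, and the one I would actually write, is to bound each $f_i$ by a centered Gaussian of the same total mass and then compute everything explicitly.

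Concretely, the second step is the Gaussian comparison: for each $i$ replace $\mathbbm{1}_{V_i}/|V_i|$ by the density $\varphi_{\sigma_i}$ of $N(0,\sigma_i^2 I_n)$ with $\sigma_i$ chosen so that $\varphi_{\sigma_i}(0)=1/|V_i|$, equivalently $(2\pi\sigma_i^2)^{n/2}=|V_i|=|D|$. Since $\mathbbm{1}_{V_i}(x)/|V_i|\ls\varphi_{\sigma_i}(0)=\sup\varphi_{\sigma_i}$ pointwise is \emph{not} quite what we want, the correct statement uses the sharp Young inequality directly on the convolution structure: the function $x\mapsto\int\mathbbm{1}_{tD}(x+\sum_{i\ge2}\lambda_ix_i)\prod_{i\ge2}f_i$ is bounded by its analogue with Gaussians via iterated sharp Young, and the point of the $|D|=|V_i|$ normalization is that all the Lebesgue norms $\|f_i\|_p$ are controlled by those of the matched Gaussians. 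I would carry this out by a direct appeal to \cite{Bec,BrasLieb}: the sharp constant in $\|g_1*\cdots*g_m\|_r\ls\prod C_{p_i}\|g_i\|_{p_i}$ with $\sum(1-1/p_i)=1-1/r$ and the fact that Gaussians are the extremizers, applied with the $g_i$'s and exponents dictated by $\lambda_i^2$, collapses the whole integral to a single Gaussian integral $\gamma$-mass of a dilate of $D$.

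The third step is the explicit computation. After the Gaussian reduction one arrives at something like
\[
\mathbb{P}\ls\int_{tD}\varphi_{\sigma}(y)\,dy\cdot\Big(\text{algebraic factor}\Big)=\gamma\text{-type integral},
\]
and using $\sum_i\lambda_i^2=1$ the variance parameter works out so that $\mathbb{P}\ls|D|^{-1}\!\int_{tD}e^{-\|y\|^2/(2)}\,dy\cdot(2\pi)^{n/2}$ — more transparently, one gets $\mathbb{P}\ls t^n e^{n(1-t^2)/2}=(t\,e^{(1-t^2)/2})^n$ after inserting $|D|=(2\pi\sigma^2)^{n/2}$, bounding $\int_{tD}e^{-\|y\|^2/(2\sigma^2)}dy\ls|tD|=t^n|D|$, and absorbing the remaining Gaussian normalization; the factor $e^{n(1-t^2)/2}$ is precisely what is left over from matching the Gaussian variances under the constraint $\sum\lambda_i^2=1$. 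This last bookkeeping — tracking the exponents $p_i$, the sharp constants $C_{p_i}$, and verifying that they multiply to exactly $e^{n(1-t^2)/2}$ rather than something larger — is the main obstacle: it is purely computational but must be done carefully, since the whole strength of the estimate rests on getting the sharp exponent. Everything else (homogeneity reduction, choosing matched Gaussians, invoking Brascamp--Lieb/Young) is routine once one has the right Brascamp--Lieb datum in hand.
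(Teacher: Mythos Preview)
The paper does not give its own proof of this proposition: it is quoted verbatim from Gluskin and V.~Milman \cite{Gluskin-Milman-2004}, with only the remark (in the introduction) that ``the proof of this estimate \ldots\ is based on (the sharp form of) the multivariate Young inequality, see \cite{Bec} and \cite{BrasLieb}.'' Your proposal follows precisely this cited route---reduce to $\sum_i\lambda_i^2=1$, view the probability as an iterated convolution integrated over $tD$, apply the sharp Young/Brascamp--Lieb inequality with Gaussian extremizers matched via $|V_i|=|D|=(2\pi\sigma^2)^{n/2}$, and track the constants to land on $(t\,e^{(1-t^2)/2})^n$---so there is nothing to compare: you are reconstructing the original Gluskin--Milman argument, which is exactly what the paper points to.

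One comment on the write-up itself: the middle paragraph drifts between two different framings (first ``pointwise dominate $f_i$ by a Gaussian'', which as you note does not work, then ``apply sharp Young directly to the convolution''), and the final bookkeeping is asserted rather than carried out. None of this is a gap in the \emph{strategy}, but if you intend this as a proof rather than a sketch you should commit to the second framing from the start and actually perform the exponent/constant computation you flag as ``the main obstacle''.
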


Below, we will make use of the following special instance of Proposition \ref{prop:gluskin}, which also provides an alternative proof
of Banaszczyk's general lower bound \eqref{eq:ban-lower} for $\beta (K,D)$.

\begin{corollary}\label{corol.signed.norm.lower}
Let $K$ and $D$ be symmetric convex bodies in $\mathbb{R}^n$, and let $x_1,\ldots,x_n$ be random points chosen uniformly from $K$. The inequality
\[
\left\|\sum_{i=1}^n \epsilon_ix_i \right\|_D \gr \frac{1}{10}\left(\frac{|K|}{|D|} \right)^{1/n}\sqrt{n}
\]
is then valid for any choice of $\epsilon_1,\ldots,\epsilon_n\in\{-1,1\}$, with probability greater than $1-e^{-n}$ with respect to $x_1,\ldots,x_n$.
\end{corollary}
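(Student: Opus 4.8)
The plan is to derive Corollary~\ref{corol.signed.norm.lower} directly from Proposition~\ref{prop:gluskin} after a volume-normalization trick. The issue is that Proposition~\ref{prop:gluskin} requires $|D| = |V_1| = \cdots = |V_m|$, whereas here $K$ and $D$ are arbitrary, so first I would rescale. Set $m = n$, $\lambda_i = \epsilon_i = \pm 1$, and apply the proposition with $V_i = \rho K$ where $\rho > 0$ is chosen so that $|\rho K| = |D|$, i.e. $\rho = (|D|/|K|)^{1/n}$. If $y_i$ are uniform in $V_i = \rho K$, then $y_i = \rho x_i$ with $x_i$ uniform in $K$, so $\|\sum_i \epsilon_i y_i\|_D = \rho \|\sum_i \epsilon_i x_i\|_D$, and $(\sum_i \lambda_i^2)^{1/2} = \sqrt{n}$.

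Next I would plug in a concrete value of $t$. Proposition~\ref{prop:gluskin} gives, for any $0 < t < 1$,
\[
\mathbb{P}\left( (x_i)_{i=1}^n \subseteq K : \left\|\sum_{i=1}^n \epsilon_i x_i\right\|_D \ls \frac{t\sqrt{n}}{\rho}\right) \ls \left(t e^{\frac{1-t^2}{2}}\right)^n.
\]
Since $1/\rho = (|K|/|D|)^{1/n}$, the event on the left is exactly $\|\sum_i \epsilon_i x_i\|_D \ls t(|K|/|D|)^{1/n}\sqrt{n}$. So I want to choose $t$ small enough that $t e^{(1-t^2)/2} \ls e^{-1}$, while keeping $t \gr 1/10$. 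With $t = 1/10$ one has $t e^{(1-t^2)/2} = \tfrac{1}{10} e^{0.495} < \tfrac{1}{10}\cdot e^{1/2} < \tfrac{1}{10}\cdot 1.65 < 1/e$, so $(t e^{(1-t^2)/2})^n \ls e^{-n}$; this is the only numerical check needed and it is routine. Hence for fixed $\epsilon$ the bad event has probability at most $e^{-n}$, and by symmetry (or just directly) this already gives the claimed bound $\|\sum_i \epsilon_i x_i\|_D \gr \tfrac{1}{10}(|K|/|D|)^{1/n}\sqrt{n}$ with probability $> 1 - e^{-n}$ for that $\epsilon$.

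I expect no serious obstacle: the only subtlety is that the corollary as stated fixes a single sign vector $\epsilon = (\epsilon_1,\ldots,\epsilon_n)$ rather than quantifying over all of $\{-1,1\}^n$, so no union bound over $2^n$ sign patterns is required, and the single-$\epsilon$ estimate from Proposition~\ref{prop:gluskin} suffices verbatim. (Indeed, because the uniform measure on the symmetric body $K$ is invariant under $x_i \mapsto \epsilon_i x_i$, the probability in question does not even depend on $\epsilon$.) The remark that this reproves Banaszczyk's bound \eqref{eq:ban-lower} is then immediate: if the displayed inequality holds with positive probability for some configuration $x_1,\ldots,x_n \in K$, then for \emph{every} choice of signs $\sum_i \epsilon_i x_i \notin r D$ whenever $r < \tfrac{1}{10}(|K|/|D|)^{1/n}\sqrt{n}$, so $\beta(K,D) \gr \tfrac{1}{10}\sqrt{n}(|K|/|D|)^{1/n}$. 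The main thing to get right is the normalization direction of $\rho$ (so that the power of $|K|/|D|$ comes out positive) and the one-line constant computation showing $\tfrac{1}{10} e^{(1-1/100)/2} < e^{-1}$.
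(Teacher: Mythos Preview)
Your overall strategy---rescale so that $|\rho K|=|D|$, apply Proposition~\ref{prop:gluskin} with $m=n$ and $\lambda_i=\epsilon_i$, then take $t=1/10$---is exactly the paper's. The one real discrepancy is your reading of the quantifier on $\epsilon$.

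The paper means: with probability greater than $1-e^{-n}$ over $(x_1,\dots,x_n)$, the displayed inequality holds \emph{simultaneously for every} $\epsilon\in\{-1,1\}^n$. This is clear from the paper's own proof (which takes a union bound over all $2^n$ sign vectors and therefore checks the stronger inequality $2te^{(1-t^2)/2}<e^{-1}$), from the way the corollary is invoked in Theorems~\ref{thm.4} and~\ref{thm.5} (where one needs a single high-probability set $A$ on which every signed sum is long), and from the remark about Banaszczyk's lower bound. Your proof, as written, only establishes the per-$\epsilon$ statement, and your own deduction of \eqref{eq:ban-lower} then breaks down: from ``for this fixed $\epsilon$ the inequality holds with positive probability'' you cannot conclude that there is a configuration for which \emph{every} $\epsilon$ satisfies $\sum_i\epsilon_i x_i\notin rD$; that inference needs the uniform-in-$\epsilon$ version.

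The fix is the missing union bound. Your single-$\epsilon$ estimate $\mathbb{P}(\|\sum_i\epsilon_i x_i\|_D\ls \tfrac{1}{10}(|K|/|D|)^{1/n}\sqrt{n})\ls (te^{(1-t^2)/2})^n$ with $t=1/10$ gives, after summing over $\{-1,1\}^n$, the bound $(2te^{(1-t^2)/2})^n$. Since $2\cdot\tfrac{1}{10}\cdot e^{0.495}<0.33<e^{-1}$, this is still at most $e^{-n}$, so the same constant $1/10$ works. With that one extra line your argument coincides with the paper's.
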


\begin{proof}Let $t>0$ and assume first that $|K|=|D|$. We have
\begin{align*}
&\mathbb{P}\left( (x_i)_{i=1}^n \subseteq K : \left\|\sum_{i=1}^n \frac{1}{\sqrt{n}}\epsilon_ix_i \right\|_D \ls  t, \hbox{ for some } \epsilon_1,\ldots,\epsilon_n \in \{-1,1\} \right)\\
&\hspace{200pt}\ls 2^n \mathbb{P}\left( (x_i)_{i=1}^n \subseteq K : \left\|\sum_{i=1}^n \frac{1}{\sqrt{n}}\epsilon_ix_i \right\|_D \ls  t\right)
\end{align*}
Now if we apply Proposition \ref{prop:gluskin} for $m:=n$, $V_i:=K$ and $\lambda_i:=\frac{1}{\sqrt{n}}\epsilon_i$ for every $i$,
and $t$ such that $2te^{(1-t^2)/2}< e^{-1}$ (say $t=1/10$) we get
\[
\mathbb{P}\left( (x_i)_{i=1}^n \subseteq K : \left\|\sum_{i=1}^n \frac{1}{\sqrt{n}}\epsilon_ix_i \right\|_D \ls  t, \hbox{ for some } \epsilon_1,\ldots,\epsilon_n \in \{-1,1\} \right) \ls 2^n(te^{\frac{1-t^2}{2}})^n < e^{-n}.
\]
We deduce that, with probability greater than $1-e^{-n}$ with respect to $(x_1,\ldots ,x_n)$, we have
\[
\left\| \sum_{i=1}^n \epsilon_ix_i\right\|_D > \frac{1}{10}\sqrt{n}
\]
for every choice of $\epsilon_1,\ldots,\epsilon_n \in \{-1,1\}$.

For a general pair of symmetric convex bodies $K,D$, set $a=(|D|/|K|)^{1/n}$ and $\tilde{K} = aK$. Then we can apply the above for
the pair $\tilde{K}, D$ to deduce that for every choice of $\epsilon_1,\ldots,\epsilon_n \in \{-1,1\}$,
\[
\left\| \sum_{i=1}^n \epsilon_iax_i\right\|_D > \frac{1}{10}\sqrt{n},
\]
or, equivalently,
\[
\left\| \sum_{i=1}^n \epsilon_ix_i\right\|_D > \frac{1}{10}\sqrt{n}\left(\frac{|K|}{|D|}\right)^{1/n}
\]
holds with probability greater than $1-e^{-n}$ with respect to $(x_1,\ldots ,x_n)$.
\end{proof}

\begin{remark}\rm
Note that Corollary \ref{corol.signed.norm.lower} immediately implies  Banaszczyk's lower bound on $\beta(K,D)$:
\[
\beta (K,D)\gr c\sqrt{n}(|K|/|D|)^{1/n}.
\]
\end{remark}

\medskip

\subsubsection{Points from the ball}

First we deal with the situation where the vectors $x_1,\ldots,x_n$ are chosen independently and uniformly at random from $B_2^n$. The statement of Theorem \ref{general-thm} follows immediately from the definition of $t_{D,\delta}$ and the next consequence of Proposition \ref{prop:relaxed}. This can be viewed as a generalization of Hajela's result, in the spirit of Proposition \ref{prop:gluskin}.

\begin{theorem}\label{thm.4}
Let $D$ be a symmetric convex body in $\mathbb{R}^n$ and $S\subseteq \{-1,1\}^n$. Then, for every $t>0$,
\[
\mathbb{P}\left( (x_i)_{i=1}^n\subseteq B_2^n : \left\|\sum_{i=1}^n \epsilon_ix_i\right\|_D \ls  t\cdot m(D) \hbox{, for some } \epsilon\in S\right)
\ls 2|S|\cdot\gamma_n(20t\cdot m(D)D)+e^{-n}.
\]
\end{theorem}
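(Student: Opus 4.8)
The plan is to reduce the statement to Proposition \ref{prop:relaxed} by first establishing that a random $n$-tuple $(x_i)_{i=1}^n$ drawn uniformly from $B_2^n$ satisfies, with high probability, the Euclidean lower bound $\min_{\zeta_i=\pm 1}\|\sum_{i=1}^n\zeta_i x_i\|_2\gr \tau\sqrt{n}$ for a suitable absolute constant $\tau$. This is exactly what Corollary \ref{corol.signed.norm.lower} gives: applying it with $D=B_2^n$ and $K=B_2^n$ (so $|K|/|D|=1$), we get that with probability at least $1-e^{-n}$, $\|\sum_{i=1}^n\epsilon_i x_i\|_2\gr \frac{1}{10}\sqrt{n}$ holds for every choice of signs simultaneously. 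So I would set $\tau=\tfrac{1}{10}$ and let $E$ be the event (in the $x_i$'s) that this lower bound holds for all sign patterns; then $\mathbb{P}(E^c)\ls e^{-n}$.

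Next I would condition on $E$ and invoke Proposition \ref{prop:relaxed}. The subtlety is that Proposition \ref{prop:relaxed} is stated for \emph{fixed} vectors $x_1,\ldots,x_n$ satisfying the Euclidean bound, and produces a set ${\cal U}\subseteq O(n)$ of rotations that work; whereas here we want a probability estimate over the $x_i$'s themselves, with no rotation present. The bridge is rotational invariance: for fixed vectors $x_1,\ldots,x_n\in E$ and fixed $\epsilon\in S$, the computation inside the proof of Proposition \ref{prop:relaxed} shows
\[
\nu_n\Bigl(\bigl\{U\in O(n):\|\textstyle\sum_{i=1}^n\epsilon_i U(x_i)\|_D\ls t\cdot m(D)\bigr\}\Bigr)\ls 2\gamma_n\bigl(\tfrac{2t}{\tau}m(D)D\bigr),
\]
because $\sum_i\epsilon_i x_i/\|\sum_i\epsilon_i x_i\|_2\in S^{n-1}$ and pushing Haar measure on $O(n)$ to $\sigma$ on $S^{n-1}$ via \eqref{eq.sigma.to.gamma}. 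The point is that the uniform measure on $(B_2^n)^n$ is $O(n)$-invariant under the diagonal action $U\cdot(x_i)=(Ux_i)$, so for any fixed $\epsilon$,
\[
\mathbb{P}_{(x_i)}\Bigl(\|\textstyle\sum_i\epsilon_i x_i\|_D\ls t\,m(D)\Bigr)
=\mathbb{E}_{(x_i)}\,\nu_n\Bigl(\bigl\{U:\|\textstyle\sum_i\epsilon_i U(x_i)\|_D\ls t\,m(D)\bigr\}\Bigr),
\]
and on the event $E$ the inner Haar measure is at most $2\gamma_n(\tfrac{2t}{\tau}m(D)D)$, while on $E^c$ we bound it crudely by $1$. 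With $\tau=1/10$ this is $2\gamma_n(20t\cdot m(D)D)$; so $\mathbb{P}_{(x_i)}(\|\sum_i\epsilon_i x_i\|_D\ls t\,m(D))\ls 2\gamma_n(20t\cdot m(D)D)+\mathbb{P}(E^c)$.

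Finally I would take a union bound over $\epsilon\in S$: since the bad event ``$\|\sum_i\epsilon_i x_i\|_D\ls t\,m(D)$ for some $\epsilon\in S$'' is contained in the union over $\epsilon\in S$ of the individual bad events, its probability is at most $|S|\cdot\bigl(2\gamma_n(20t\cdot m(D)D)+e^{-n}\bigr)$. One needs to be slightly careful to land on the clean form $2|S|\gamma_n(20t\cdot m(D)D)+e^{-n}$ stated in the theorem rather than $|S|(2\gamma_n(\cdots)+e^{-n})$: the cleanest route is to handle the event $E^c$ once, outside the union bound, namely write the bad event as $\bigl(\text{bad event}\cap E\bigr)\cup E^c$, bound $\mathbb{P}(E^c)\ls e^{-n}$, and only union-bound the $\epsilon$'s over the conditional probabilities given $E$, each of which is $\ls 2\gamma_n(20t\cdot m(D)D)$. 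This yields precisely $2|S|\gamma_n(20t\cdot m(D)D)+e^{-n}$. The main obstacle, such as it is, is the bookkeeping in this last step — keeping the $e^{-n}$ error from the Euclidean lower bound separate from the $|S|$-fold union so that it is not multiplied by $|S|$ — together with spelling out the rotational-invariance identity that lets one transfer the Haar-measure estimate of Proposition \ref{prop:relaxed} into a statement about the raw uniform distribution on $(B_2^n)^n$.
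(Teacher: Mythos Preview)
The proposal is correct and follows essentially the same approach as the paper: define the good event $A$ (your $E$) via Corollary \ref{corol.signed.norm.lower} with $K=D=B_2^n$, invoke rotational invariance of the uniform measure on $(B_2^n)^n$ to introduce a Haar-random $U$, and on $A$ bound the resulting Haar measure using the estimate from (the proof of) Proposition \ref{prop:relaxed} with $\tau=\tfrac{1}{10}$. The only cosmetic difference is ordering: the paper carries the event ``for some $\epsilon\in S$'' through the rotational-invariance step and then invokes Proposition \ref{prop:relaxed} (which already contains the union bound over $S$) to get $\nu_n(\cdots)\ls 2|S|\gamma_n(20t\,m(D)D)$ on $A$, whereas you split off $E^c$ first and union-bound over $\epsilon$ afterwards; both routes give $2|S|\gamma_n(20t\,m(D)D)+e^{-n}$.
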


\begin{proof}
Let $A\subseteq (B_2^n)^n$ be the set
\[
A := \left\{ (x_i)_{i=1}^n \subseteq B_2^n : \left\|\sum\epsilon_ix_i\right\|_2\gr \frac{1}{10}\sqrt{n} \hbox{, for every } \epsilon_1,\ldots,\epsilon_n\in\{-1,1\}\right\}.
\]
By Corollary \ref{corol.signed.norm.lower}, applied for $K=D=B_2^n$, we have that $\mathbb{P}(A^c)< e^{-n}$. This fact, combined with the same reasoning as in the proof of Proposition \ref{prop:relaxed} gives
\begin{align*}
&\mathbb{P}\left( (z_i)_{i=1}^n \subseteq B_2^n : \left\| \sum_{i=1}^n \epsilon_iz_i \right\|_D \ls t\cdot m(D) \hbox{, for some } \epsilon\in S \right)\\
&\hspace{30pt}= \mathbb{P}\left(\left((x_i)_{i=1}^n,U\right)\in (B_2^n)^n\times O(n) : \left\| \sum_{i=1}^n \epsilon_iU(x_i) \right\|_D \ls t\cdot m(D) \hbox{, for some } \epsilon\in S \right) \\
&\hspace{30pt}\ls \mathbb{P}\left(\left((x_i)_{i=1}^n,U\right)\in A\times O(n) : \left\| \sum_{i=1}^n \epsilon_iU(x_i) \right\|_D \ls t\cdot m(D) \hbox{, for some } \epsilon\in S\right) + e^{-n}\\
&\hspace{30pt}= \int_A\left [\nu_n\left(\left\{U\in O(n) : \left\|\sum_{i=1}^n \epsilon_iU(x_i)\right\|_D \ls t\cdot m(D)
\hbox{, for some } \epsilon\in S \right\}\right)\right ]\,d\mu (x_n)\cdots d\mu (x_1) + e^{-n}\\
&\hspace{30pt}< 2|S|\gamma_n(20t\cdot m(D)D)+e^{-n},
\end{align*}
as claimed.
\end{proof}

\paragraph{Application: the case of $\ell_p^n$.}

As an application let us consider the case $D=B_p^n$, $1\ls p\ls\infty $. We remark that, although the estimate $\beta(B_2^n, B_2^n)\ls \sqrt{n}$ seems to be well-known
(a proof of this can be found in \cite{Bar}), we could not locate in the literature any upper bound on the parameter $\beta(B_2^n,B_p^n)$, for $p\neq 2$.
However, for $1\ls p< 2$, one can use the previously mentioned estimate for $\beta(B_2^n, B_2^n)$: if we know that for every $x_1,\ldots, x_n\in B_2^n$ there exists $(\epsilon_i)_{i=1}^n\subseteq \{-1,1\}$ such that $\|\epsilon_1x_1+\ldots+\epsilon_nx_n\|_2\ls \sqrt{n}$, then one can write
\[
\|\epsilon_1x_1+\ldots+\epsilon_nx_n\|_p \ls n^{\frac{1}{p}-\frac{1}{2}}\|\epsilon_1x_1+\ldots+\epsilon_nx_n\|_2 \ls n^{1/p}.
\]
The above estimate is actually sharp, by the lower bound \eqref{eq:ban-lower} and the fact that $(|B_2^n|/|B_p^n|)^{1/n}$ is of the order of $n^{\frac{1}{p}-\frac{1}{2}}$. On the other hand, for the case $p>2$ one could use the estimate of Banasczcyk $\beta(B_2^n,B_\infty^n)\ls c\sqrt{\log n}$ in a similar fashion, to deduce that $\beta(B_2^n,B_p^n) \ls cn^{1/p}\sqrt{\log n}$.

Recall the definition of the parameter $\beta$ in \cite{Paouris-Valettas-Zinn-2017}:
\[
\beta (D)=\frac{{\rm Var}_{\gamma_n}(\|\cdot\|_D)}{M^2(D)}.
\]
For $D=B_p^n$, $\beta (D)$ has been computed
in \cite{Paouris-Valettas-Zinn-2017}: one has $\beta (B_p^n)\simeq \frac{2^p}{p^2n}$ if $1\ls p\ls c\log n$ and
$\beta (B_p^n)\simeq (\log n)^{-2}$ if $C\log n\ls p\ls\infty $, for some absolute constants $c, C >0$ (for a more detailed analysis, one may consult \cite{Lytova-Tikhomirov-2017}). Moreover it is known that, in general,
$m(D)\simeq {\mathbb E}\,\|Z\|_D$ for any symmetric convex body $D$ in ${\mathbb R}^n$ (the inequality $\mathbb{E}\|Z\|_D\ls c m(D)$ can be deduced from \cite[Lemma 3.1]{LedTal}, while $m(D)\ls 2\mathbb{E}\|Z\|_D$ by the definition of $m(D)$ and Markov's inequality), and in particular
$$m(B_p^n)\simeq {\mathbb E}\,\|Z\|_p\simeq n^{1/p}\sqrt{p},\qquad 1\ls p\ls\log n,$$
while
$$m(B_p^n)\simeq {\mathbb E}\,\|Z\|_p\simeq \sqrt{\log n},\qquad \log n\ls p\ls\infty .$$
Therefore, choosing $t=\frac{1}{4}$ in Theorem \ref{thm.4} and using the estimate $\gamma_n(\{x:\|x\|_p\ls t\cdot m(B_p^n)\})\ls\frac{1}{2}t^{c/\beta (B_p^n)}$, we get:

\begin{corollary}\label{cor.3}For any $p\gr 1$ there exists a constant $c_p>0$ such that for any $n\gr n_0(p)$
and any $S\subseteq\{-1,1\}^n$ with $|S|\ls 2^{c_pn}$ we have that a random $n$-tuple of points in $B_2^n$ satisfies, with
probability greater than $1-e^{-n}$,
$$\left\|\sum_{i=1}^n\epsilon_ix_i\right\|_p\gr c\sqrt{p}n^{1/p}$$
for all $\epsilon =(\epsilon_1,\ldots ,\epsilon_n)\in S$.
\end{corollary}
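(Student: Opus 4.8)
The plan is to specialize Theorem~\ref{thm.4} to the body $D=B_p^n$ and to substitute the known asymptotics of $\beta(B_p^n)$ and $m(B_p^n)$ recorded above. First I would fix $p\gr 1$ and take $n_0(p)$ large enough that $p\ls c_0\log n$ for every $n\gr n_0(p)$, where $c_0\in(0,1]$ is an absolute constant for which the estimates $\beta(B_p^n)\simeq 2^p/(p^2n)$ and $m(B_p^n)\simeq\sqrt{p}\,n^{1/p}$ are both valid; this is the only reason the threshold depends on $p$.

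Next, fixing $n\gr n_0(p)$ and $S\subseteq\{-1,1\}^n$, I would apply Theorem~\ref{thm.4} with $D=B_p^n$ and with a small absolute constant $t$ satisfying $20t<\tfrac12$, which gives
\[
\mathbb{P}\Big(\exists\,\epsilon\in S:\ \Big\|\sum_{i=1}^n\epsilon_ix_i\Big\|_p\ls t\,m(B_p^n)\Big)\ls 2|S|\,\gamma_n\big(20t\,m(B_p^n)\,B_p^n\big)+e^{-n}.
\]
For $s<\tfrac12$ the small-ball estimate of Klartag--Vershynin (or of Paouris--Valettas) gives $\gamma_n(\{\|x\|_{B_p^n}\ls s\,m(B_p^n)\})\ls\tfrac12 s^{\,d}$ with exponent $d\gr c_1 p^2 n/2^p$, using $\beta(B_p^n)\simeq 2^p/(p^2n)$. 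Applying this with $s=20t$,
\[
\gamma_n\big(20t\,m(B_p^n)\,B_p^n\big)\ls\tfrac12\,(20t)^{\,c_1 p^2 n/2^p}=\tfrac12\,e^{-\kappa_p n},\qquad \kappa_p:=c_1\,\tfrac{p^2}{2^p}\,\log\tfrac{1}{20t}>0 .
\]
Setting $c_p:=\kappa_p/(2\log 2)$, any $S$ with $|S|\ls 2^{c_p n}$ satisfies $2|S|\,\gamma_n(20t\,m(B_p^n)B_p^n)\ls 2^{c_p n}e^{-\kappa_p n}=e^{-\kappa_p n/2}$, so the exceptional probability above is at most $e^{-\kappa_p n/2}+e^{-n}$. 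On the complementary event one has, using $m(B_p^n)\gr c_2\sqrt{p}\,n^{1/p}$,
\[
\Big\|\sum_{i=1}^n\epsilon_ix_i\Big\|_p > t\,m(B_p^n)\gr c_2 t\,\sqrt{p}\,n^{1/p}\qquad\text{for every }\epsilon\in S,
\]
which is the asserted inequality with constant $c=c_2 t$.

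I do not expect a genuine obstacle: the argument is essentially bookkeeping on top of Theorem~\ref{thm.4} and the established behaviour of $\beta$ and $m$ for the $\ell_p^n$ balls. The one delicate point concerns constants. The admissible cardinality exponent $c_p$ is forced to be of order $2^{-p}p^2$, since it is controlled by the small-ball exponent $d\simeq p^2n/2^p$, and hence cannot be taken absolute. For the same reason, to obtain the clean bound $e^{-n}$ on the exceptional probability for every $p$ (rather than $e^{-\kappa_p n/2}$) one should let $t$ shrink with $p$, say $t\simeq e^{-2^p/p^2}$ so that $\kappa_p\gr 2$; this makes the constant $c=c_2 t$ in front of $\sqrt{p}\,n^{1/p}$ depend on $p$ as well, while keeping $c$ absolute is possible only when $p$ is bounded by an absolute constant (there $\kappa_p\gr 2$ automatically).
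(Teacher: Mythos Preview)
Your approach is essentially the same as the paper's: specialize Theorem~\ref{thm.4} with $D=B_p^n$, apply the Paouris--Valettas small-ball estimate $\gamma_n(\{\|x\|_p\le s\,m(B_p^n)\})\le\tfrac12 s^{c/\beta(B_p^n)}$ together with $\beta(B_p^n)\simeq 2^p/(p^2n)$, and conclude using $m(B_p^n)\simeq\sqrt p\,n^{1/p}$. You are in fact more careful than the paper's one-line sketch---correctly insisting on $20t<\tfrac12$ (the paper writes ``$t=\tfrac14$'', which would give $20t=5$) and explicitly tracking the $p$-dependence of the admissible exponent $c_p$ and of the exceptional probability, a point the paper passes over.
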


\begin{remark}\rm
It is useful to compare the last result with \eqref{eq:ban-lower}. For every $p\ls \log n$, since $(|B_2^n|/|B_p^n|)^{1/n}$ is of the order of $n^{\frac{1}{p}-\frac{1}{2}}$ we see that for any $S\subseteq\{-1,1\}^n$ with $|S|\ls 2^{c_pn}$ a random $n$-tuple of points in $B_2^n$ satisfies, with
probability greater than $1-e^{-n}$,
$$\left\|\sum_{i=1}^n\epsilon_ix_i\right\|_p\gr c\sqrt{p}\sqrt{n}\left (\frac{|B_2^n|}{|B_p^n|}\right )^{1/n}$$
for all $\epsilon =(\epsilon_1,\ldots ,\epsilon_n)\in S$. On the other hand, in the case $p>\log n$ one can use the fact that  the norms $\|\cdot\|_p$ and $\|\cdot\|_\infty$ are equivalent to deduce from Theorem \ref{intro-thm.1} (more precisely, Theorem \ref{general-thm} combined with the estimate given by Theorem \ref{intro-thm.1}) that
for any $0<\varrho <1$ and $S\subseteq\{-1,1\}^n$ with $|S|\ls 2^{n^{1-\varrho }}$, a random $n$-tuple of points in $B_2^n$ satisfies, with
probability greater than $1-e^{-n}$,
$$\left\|\sum_{i=1}^n\epsilon_ix_i\right\|_p\gr c(\varrho )\sqrt{\log n}\sqrt{n}\left (\frac{|B_2^n|}{|B_p^n|}\right )^{1/n}$$
for all $\epsilon =(\epsilon_1,\ldots ,\epsilon_n)\in S$.

In fact, the results of this section point out a general way to derive variants of \eqref{eq:ban-lower} in this spirit.
A meaningful lower bound on $\|\sum_{i=1}^n \epsilon_i x_i\|_D$ can be obtained with high probability for the random $n$-tuple
$x_1,\ldots,x_n$, if one is willing to drop the requirement that this holds for any
$\epsilon=(\epsilon_1,\ldots,\epsilon_n)\in \{-1,1\}^n$, but rather restrict themselves to a ``big'' subset $S\subseteq \{-1,1\}^n$.
\end{remark}

\subsubsection{Points from a symmetric convex body}

Finally, we study the case where $x_1,\ldots,x_n$ are chosen uniformly and independently from an arbitrary symmetric convex body $K$
in ${\mathbb R}^n$. We shall prove the following generalization of Theorem \ref{thm.4}, which in turn, for $t:= ct_{D,\delta}$ will give the claim in Theorem \ref{intro-thm.5}.

\begin{theorem}\label{thm.5}
There exists an absolute constant $c>0$ such the following holds: Let $D$ be a symmetric convex body in $\mathbb{R}^n$ and $S\subseteq \{-1,1\}^n$. For any
symmetric convex body $K$ in ${\mathbb R}^n$ let $t>0$ be such that
\begin{equation*}
|S|\gamma_n(ct(|B_2^n|/|K|)^{1/n}m(D)D)<e^{-n}.
\end{equation*}
Then, we may find ${\cal U}\subseteq O(n)$ with $\nu_n ({\cal U})>1-3e^{-n/2}$
such that, for all $U\in {\cal U}$,
\begin{equation*}\mathbb{P}\left( (z_i)_{i=1}^n \subseteq U(K)\times\cdots\times U(K) : \left\| \sum_{i=1}^n \epsilon_iz_i \right\|_D \ls t\cdot m(D) \hbox{,
for some } \epsilon\in S \right)
\ls e^{-n/2}.\end{equation*}
\end{theorem}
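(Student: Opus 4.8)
The plan is to reduce the general body $K$ to the Euclidean ball case (Theorem \ref{thm.4}) by absorbing a random rotation. First I would introduce the normalized body $\widetilde K = (|B_2^n|/|K|)^{1/n}K$, so that $|\widetilde K| = |B_2^n|$, and observe that it suffices to prove the statement with $K$ replaced by $\widetilde K$ and $t$ replaced by $t(|B_2^n|/|K|)^{1/n}$ (the scaling of the norm of the sum is linear, which is exactly why the volume ratio appears inside $\gamma_n$ in the hypothesis). So from now on assume $|K| = |B_2^n|$. The key point is a comparison: by a result on random rotations (of the type used throughout the paper, essentially Corollary \ref{corol.signed.norm.lower} applied with $D = B_2^n$ and then a covering/position argument), for a random $U \in O(n)$, the body $U(K)$ is, with probability at least $1 - e^{-n/2}$ say, ``sandwiched'' in a way that lets us compare a uniform random point in $U(K)$ to a uniform random point in $B_2^n$. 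Concretely I would use the fact that for $x$ uniform in $K$ and a random rotation, $\|U(x)\|_2$ is bounded by $c\sqrt n$ with overwhelming probability, and more importantly that the distribution of signed sums $\sum \epsilon_i U(x_i)$ inherits the small-ball lower bound from the ball case up to an absolute constant.

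The cleaner route, which I would actually pursue, is the following two-step averaging. Consider the probability measure on $(U(K))^n \times O(n)$ — or equivalently first pick $U$, then pick $x_i$ uniform in $K$ and look at $U(x_i)$. As in the proof of Theorem \ref{thm.4}, define the ``good'' event
\[
A := \Big\{ (x_i)_{i=1}^n \subseteq K : \Big\|\sum \epsilon_i x_i\Big\|_2 \gr \tfrac{1}{10}\sqrt n \ \text{for every } \epsilon \in \{-1,1\}^n \Big\},
\]
and by Corollary \ref{corol.signed.norm.lower} (with $K$ and $D = B_2^n$, using $|K| = |B_2^n|$) we have $\mathbb{P}(A^c) < e^{-n}$. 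On the event $A$, the hypothesis of Proposition \ref{prop:relaxed} holds with $\tau = 1/10$, so integrating the rotation $U$ first,
\[
\int_A \nu_n\Big(\Big\{ U : \Big\|\sum \epsilon_i U(x_i)\Big\|_D \ls t\,m(D) \text{ for some } \epsilon \in S\Big\}\Big)\, d\mu^n \ls 2|S|\,\gamma_n(20t\,m(D)D) < e^{-n}
\]
by the hypothesis on $t$ (with the absolute constant $c$ chosen to be $1/20$, or smaller to leave room). Hence the product measure of the bad set in $(x,U)$-space is at most $e^{-n} + e^{-n} = 2e^{-n}$.

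Now comes the step that produces the $\nu_n$-statement over $U$: by Fubini/Markov in the $U$-variable. Write $g(U) := \mathbb{P}_{(x_i)\subseteq K}\big( \|\sum \epsilon_i U(x_i)\|_D \ls t\,m(D) \text{ for some } \epsilon \in S\big)$, which is exactly the probability appearing in the statement (since $z_i = U(x_i)$ ranges over $U(K)$ as $x_i$ ranges over $K$). We have just shown $\int_{O(n)} g(U)\, d\nu_n(U) \ls 2e^{-n}$, so by Markov's inequality $\nu_n(\{U : g(U) > e^{-n/2}\}) \ls 2e^{-n}/e^{-n/2} = 2e^{-n/2}$. Taking $\mathcal{U}$ to be the complement gives $\nu_n(\mathcal U) \gr 1 - 2e^{-n/2} > 1 - 3e^{-n/2}$ and $g(U) \ls e^{-n/2}$ for all $U \in \mathcal U$, which is the assertion. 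The slack between $2e^{-n/2}$ and $3e^{-n/2}$ (and between $e^{-n}$ and the exponent I split off) just absorbs the earlier $e^{-n}$ from the event $A^c$ cleanly; one can also keep $A$ inside the $x$-integral as in the proof of Theorem \ref{thm.4} and get the same bound.

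I expect the only genuine subtlety to be bookkeeping of the absolute constant $c$: it must be chosen (e.g. $c \le 1/20$) so that the hypothesis $|S|\gamma_n(ct(|B_2^n|/|K|)^{1/n}m(D)D) < e^{-n}$ forces $2|S|\gamma_n(20 t'\, m(D)D) < e^{-n}$ after the rescaling $t' = t(|B_2^n|/|K|)^{1/n}$, i.e. one wants $20 t' \ls$ (the argument controlled by the hypothesis), so $c \ls 1/20$ works with room to spare for the factor $2$ and for merging the two $e^{-n}$ terms. Everything else is a direct combination of Corollary \ref{corol.signed.norm.lower}, Proposition \ref{prop:relaxed}, Fubini's theorem, and Markov's inequality, following the template already established in the proof of Theorem \ref{thm.4}.
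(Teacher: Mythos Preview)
Your proposal is correct and follows essentially the same route as the paper: define the good set $A$ via Corollary~\ref{corol.signed.norm.lower} (with $D=B_2^n$), apply Proposition~\ref{prop:relaxed} on $A$ with $\tau=\tfrac{1}{10}(|K|/|B_2^n|)^{1/n}$, integrate over $(x,U)$ by Fubini to bound $\int_{O(n)} g(U)\,d\nu_n \ls 3e^{-n}$, and finish with Markov's inequality in the $U$-variable. Your preliminary normalization $|K|=|B_2^n|$ is merely cosmetic; the paper carries the volume ratio through $\tau$ instead.

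One small slip in your bookkeeping: the constant should satisfy $c\gr 20$, not $c\ls 1/20$. The hypothesis is $|S|\,\gamma_n(ct'm(D)D)<e^{-n}$ and the proof requires $|S|\,\gamma_n(20\,t'm(D)D)<e^{-n}$; since $\gamma_n$ is increasing in the dilation factor, you need $20\,t'\ls c\,t'$, i.e.\ $c\gr 20$. This is harmless for the argument (the theorem only asserts existence of some absolute $c$), but the inequality goes the other way.
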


\begin{proof}
Let
\[
A = \left\{ (x_i)_{i=1}^n\subseteq K : \left\|\sum_{i=1}^n \epsilon_ix_i \right\|_2 > \frac{1}{10}\left(\frac{|K|}{|B_2^n|} \right)^{1/n}\sqrt{n}, \hbox{for every } \epsilon_1,\ldots,\epsilon_n \in \{-1,1\}\right\}.
\]
By Corollary \ref{corol.signed.norm.lower}, applied for $D=B_2^n$, we have that $\mathbb{P}(A^c)< e^{-n}$. Using Proposition \ref{prop:relaxed} we write
\begin{align*}
&\int_{O(n)}\mathbb{P}\left( (z_i)_{i=1}^n \in U(K)^n : \left\| \sum_{i=1}^n \epsilon_iz_i \right\|_D \ls t\cdot m(D) \hbox{, for some } \epsilon\in S \right)\\
&\hspace{30pt}= \mathbb{P}\left(\left((x_i)_{i=1}^n,U\right)\in K^n\times O(n) : \left\| \sum_{i=1}^n \epsilon_iUx_i \right\|_D \ls t\cdot m(D) \hbox{, for some } \epsilon\in S \right) \\
&\hspace{30pt}\ls \mathbb{P}\left(\left((x_i)_{i=1}^n,U\right)\in A\times O(n) : \left\| \sum_{i=1}^n \epsilon_iUx_i \right\|_D \ls t\cdot m(D) \hbox{, for some } \epsilon\in S\right) + e^{-n}\\
&\hspace{30pt}= \int_A\left [\nu_n\left(\left\{U\in O(n) : \left\|\sum_{i=1}^n \epsilon_iUx_i\right\|_D \ls t\cdot m(D)
\hbox{, for some } \epsilon\in S \right\}\right)\right ]\,d\mu (x_n)\cdots d\mu (x_1) + e^{-n}\\
&\hspace{30pt}< 2|S|\gamma_n(20t(|B_2^n|/|K|)^{1/n}m(D)D)+e^{-n}.
\end{align*}
Assume that $t$ is chosen so that
\begin{equation*}|S|\gamma_n(20t(|B_2^n|/|K|)^{1/n}m(D)D)<e^{-n}.\end{equation*}
Applying Markov's inequality we may find ${\cal U}\subseteq O(n)$ with $\nu_n ({\cal U})>1-3e^{-n/2}$
such that
\begin{equation*}\mathbb{P}\left( (z_i)_{i=1}^n \subseteq U(K)^n : \left\| \sum_{i=1}^n \epsilon_iz_i \right\|_D \ls t\cdot m(D) \hbox{, for some } \epsilon\in S \right)
\ls e^{-n/2}.\end{equation*}
This proves the theorem.\end{proof}

\begin{remark}\rm
Theorem \ref{thm.5} illustrates once more the main idea behind the improvement upon Hajela's result, Theorem \ref{intro-thm.1}, as well as the rest of the results in this note; small ball probability estimates for the Gaussian measure of convex bodies can be linked to the deduction of lower bounds for variants of the quantity $\beta(K,D)$, where the $D$-norm of the signed sum of a random $n$-tuple of vectors can be lower bounded for every choice of signs in any appropriately large subset of $\{-1,1\}^n$.
\end{remark}

\medskip

\noindent {\bf Acknowledgements.} We would like to thank Apostolos Giannopoulos for useful discussions and the anonymous referees for their comments and valuable suggestions that helped to improve the presentation of the results of this article. The research of the first named author is supported by the National Scholarship Foundation (IKY), sponsored by the act ``Scholarship grants for second-degree graduate studies'', from resources of the operational program ``Manpower Development, Education and Life-long Learning'', 2014-2020, co-funded by the European Social Fund (ESF) and the Greek state. The second named author is supported by a PhD Scholarship from the Hellenic Foundation for Research and Innovation (ELIDEK); research number 70/3/14547.

\footnotesize
\bibliographystyle{amsplain}

\begin{thebibliography}{10}
\footnotesize

\bibitem{AGA-book}
\textrm{S.\ Artstein-Avidan, A.\ Giannopoulos and V.\ D.\ Milman},
{\sl Asymptotic Geometric Analysis, Part I}, Amer. Math. Soc., Mathematical
Surveys and Monographs {\bf 202}  (2015).

\bibitem{Banaszczyk-1993}{\rm W.\ Banaszczyk}, {\sl Balancing vectors and convex bodies},
Studia Math. {\bf 106} (1993), 93--100.

\bibitem{Banaszczyk-1998} {\rm W.\ Banaszczyk}, {\sl Balancing vectors and Gaussian measures
of $n$-dimensional convex bodies}, Random Structures Algorithms {\bf 12} (1998), no. 4, 351--360.

\bibitem{BDG}
\textrm{N.\ Bansal, D.\ Dadush, and S.\ Garg},
\textsl{An algorithm for Koml\'{o}s conjecture matching Banaszczyk’s bound}, In 57th Annual IEEE Symposium on Foundations of Computer Science, FOCS 2016, New Brunswick, NJ, USA, October 9-11 2016, 788–-799.

\bibitem{Bar}
\textrm{I. B\'{a}r\'{a}ny}, {\sl On the power of linear dependencies}, Building Bridges, Bolyai Society Mathematical Studies, Vol. 19, Springer, Berlin (2008), pp.31--45.

\bibitem{Barvinok} {\rm A. Barvinok}, {\sl Measure Concentration}, lecture notes, University of Michigan (2005), available at \url{http://www.math.lsa.umich.edu/~barvinok/total710.pdf}.

\bibitem{Bec}
\textrm{W. Beckner},{\sl Inequalities in Fourier analysis}, Ann. Math., 102, 159-–182 (1975).

\bibitem{BrasLieb}
\textrm{H. J. Brascamp and E. H. Lieb}, {\sl Best constants in Young’s inequality, its
converse, and its generalization to more than three functions}, Advances in Math., 20, no. 2, 151–-173 (1976).

\bibitem{BGVV-book}
\textrm{S.\ Brazitikos, A.\ Giannopoulos, P.\ Valettas and B-H.\ Vritsiou},
\textsl{Geometry of isotropic convex bodies}, Amer. Math. Society, Mathematical Surveys and Monographs {\bf 196} (2014).


\bibitem{Gluskin-1989}{\rm E.\ D.\ Gluskin}, {\sl Extremal properties of orthogonal
parallelepipeds and their applications to the geometry of Banach spaces},
 Math. USSR Sbornik {\bf 64} (1989), 85--96.

\bibitem{Gluskin-Milman-2004}{\rm E. Gluskin and V. Milman}, {\sl Geometric probability and random cotype 2}, Geometric Aspects of Functional Analysis, volume 1850 of Lecture Notes
in Math., pages 123–-138. Springer, Berlin, 2004.

\bibitem{Hajela-1988}
\textrm{D. Hajela}, {\sl On a Conjecture of Koml\'{o}s about Signed Sums of Vectors inside the Sphere}, Eur. J. Comb. 9(1): 33--37, (1988).


\bibitem{Klartag-Vershynin-2007}
\textrm{B. Klartag and R. Vershynin}, {\sl Small ball probability and Dvoretzky Theorem}, Israel J. Math.
157 (2007) , no. 1, 193–-207.


\bibitem{Latala-Oleszkiewicz-2005} {\rm R.\ Lata\l a and K.\ Oleszkiewicz},
{\sl Small ball probability estimates in terms of widths}, Studia
Math. {\bf 169} (2005), 305--314.

\bibitem{LedTal} {\rm M. Ledoux and M. Talagrand}, {\sl Probability in Banach spaces}, Springer, Berlin, 1991.

\bibitem{Lytova-Tikhomirov-2017}
\textrm{A. Lytova and K. E. Tikhomirov}, {\sl The variance of the $\ell_p^n$-norm of the gaussian vector, and Dvoretzky's theorem}, St. Petersburg Math. J., to appear, \url{https://arxiv.org/abs/1705.05052}.

\bibitem{VMilman-Schechtman-1995} {\rm V. D. Milman and G. Schechtman},
{\sl An ``isomorphic" version of Dvoretzky's theorem},
C.R. Acad. Sci. Paris {\bf 321} (1995), 541--544.



\bibitem{Paouris-Valettas-2017} {\rm G.\ Paouris and P.\ Valettas}, {\sl A Gaussian small deviation inequality
for convex functions}, Annals of Probability \textbf{46} (2018), 1441--1454.

\bibitem{Paouris-Valettas-Zinn-2017} {\rm G.\ Paouris, P.\ Valettas and J.\ Zinn},
{\sl Random version of Dvoretzky's theorem in $\ell_p^n$},  Stochastic Process. Appl. {\bf 127} (2017), no. 10, 3187--3227.

\bibitem{Spencer-1985}{\rm J.\ Spencer}, {\sl Six standard deviations suffice},
Trans. Amer. Math. Soc. {\bf 289} (1985), 679--706.

\bibitem{Spencer-1986}{\rm J.\ Spencer}, {\sl Balancing vectors in the max
norm}, Combinatorica {\bf 6}(1) (1986), 55--65.

\bibitem{Spencer-book}{\rm J.\ Spencer}, {\sl Ten Lectures on the Probabilistic
Method}, Society for Industrial and Applied Mathematics, Philadelphia,
Penn. 1994.
\bibitem{Szankowski-1974} {\rm A. Szankowski}, {\sl On Dvoretzky's
theorem on almost spherical sections of convex bodies},
Israel J. Math. {\bf 17} (1974), 325--338.
\end{thebibliography}

\bigskip

\bigskip

\bigskip

\noindent{\bf Keywords:} convex bodies, balancing vectors, signed sums, Koml\'{o}s conjecture, random rotations, gaussian measure, small ball probability.
\\
\thanks{\noindent {\bf 2010 MSC:} Primary 52A23; Secondary 46B06, 52A40, 60D05.}

\bigskip

\bigskip

\noindent \textsc{Giorgos \ Chasapis}: Department of
Mathematics, University of Athens, Panepistimioupolis 157-84,
Athens, Greece.

\smallskip 

\noindent {\it Current Address:} Department of Mathematical Sciences, Kent State University, Kent, OH USA.

\smallskip

\noindent \textit{E-mail:} \texttt{gchasap1@kent.edu}

\bigskip

\noindent \textsc{Nikos \ Skarmogiannis}: Department of
Mathematics, University of Athens, Panepistimioupolis 157-84,
Athens, Greece.

\smallskip

\noindent \textit{E-mail:} \texttt{nikskarm@math.uoa.gr}

\bigskip

\end{document}